\newtheorem{theorem}{Theorem}[section]
\newtheorem{lemma}[theorem]{Lemma}
\newtheorem{definition}[theorem]{Definition}
\theoremstyle{remark}
\newtheorem{remark}[theorem]{\bf Remark}
\newtheorem{example}[theorem]{\bf Example}
\renewcommand{\leq}{\leqslant}
\renewcommand{\geq}{\geqslant}
\newcommand{\ptl}{\partial}
\newcommand{\rr}{\mathbb{R}}
\newcommand{\nn}{\mathbb{N}}
\newcommand{\zz}{\mathbb{Z}}
\numberwithin{equation}{section}
\begin{document}

\title[Multi-rotationally symmetric planar convex bodies]
{The maximum relative diameter for multi-rotationally symmetric planar convex bodies}

\author[A. Ca\~nete]{Antonio Ca\~nete}
\address{Departamento de Matem\'atica Aplicada I \\ Universidad de Sevilla }
\email{antonioc@us.es}
%
%

\subjclass[2010]{52A10, 52A40}
\keywords{$k$-rotationally symmetric planar convex body, maximum relative diameter}
\date{\today}


\begin{abstract}
In this work we study the maximum relative diameter functional $d_M$ in the
class of multi-rotationally symmetric planar convex bodies.
A given set $C$ of this class is $k$-rotationally symmetric for $k\in\{k_1,\ldots,k_n\}\subset\nn$,
and so it is natural to consider the standard $k_i$-partition $P_{k_i}$ associated to $C$
(which is a minimizing $k_i$-partition for $d_M$ when $k_i\geq3$) and the corresponding value $d_M(P_{k_i})$.
We establish the relation among these values, characterizing the particular sets for
which all these values coincide.

\end{abstract}

\maketitle

\section{Introduction}

The class of rotationally symmetric planar convex bodies is an interesting family of sets,
which constitutes a suitable setting for studying different geometrical problems.
Recall that a planar convex body (and so, consequently compact) is \emph{rotationally symmetric}
if it is invariant under the rotation of a certain angle centered at a point (called the center of symmetry of the set),
presenting therefore nice properties from a rotational point of view.
The sets of this family appear frequently as the optimal bodies
in inequalities involving two and three classical geometric magnitudes~\cite{sa,hss},
and in other related questions (for instance, see ~\cite[Th.~4]{cianchi}).

The sets of this class have been recently considered in the following optimization problem:
given a $k$-rotationally symmetric planar convex body $C$, where $k\in\nn$, $k\geq 2$
(which indicates that $C$ is invariant for the rotation of angle $2\pi/k$),
we can consider a decomposition $P$ of $C$ into $k$ connected subsets $C_1,\ldots, C_k$.
Then, the \emph{maximum relative diameter} associated to the decomposition $P$
is given by
$$d_M(P)=\max\{D(C_i): i=1,\ldots,k\},$$
where $D(C_i)$ denotes the classical Euclidean diameter functional.
Notice that the maximum relative diameter is a functional defined for any decomposition $P$ of $C$,
providing the largest distance in the subsets given by $P$,
suggesting in some sense how large these subsets are (concerning the diameter).
In this setting, we can investigate which decompositions of $C$
give the minimal possible value for the maximum relative diameter.
In other words, we search for the subdivisions of $C$ providing the \emph{minimal} largest
distance in the corresponding subsets.

At this point, it is convenient 
to distinguish a specific type of subdivisions called \emph{$k$-partitions}:
they are decompositions given by $k$ curves meeting in an interior point of $C$,
and meeting the boundary of $C$ at different points. They arise from the fact that $C$
has an remarkable interior point, which is the center of symmetry, and so
it is natural to consider these particular decompositions,
which are originated from an arbitrary interior point.

\begin{figure}[h]
    \includegraphics[width=0.73\textwidth]{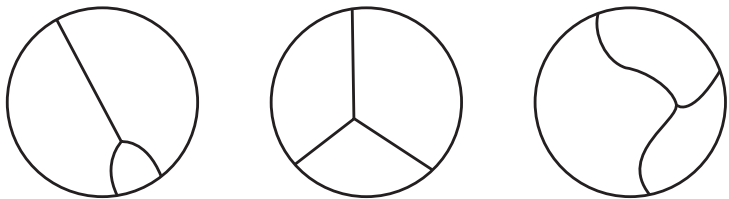}\\
  \caption{Three different $3$-partitions for the circle}
  \label{fig:circles}
\end{figure}

The above problem involving the maximum relative diameter
was studied partially for $k=2$ in \cite{mps}, proving that
a \emph{minimizing} decomposition into two equal-area connected subsets
is given by a straight line passing through the center of symmetry of the set \cite[Prop.4]{mps},
but a complete and more precise characterization is not known yet.
Later on, for $k\geq 3$, the general problem was treated in~\cite{extending}, see also \cite{trisecciones},
obtaining that the so-called \emph{standard $k$-partition} is a minimizing $k$-partition
(being also a minimizing decomposition without additional restrictions when $k\leq 6$)~\cite[Th.~4.5 and 4.6]{extending}.
We point out that the standard $k$-partition associated to a $k$-rotationally symmetric planar convex body 
divides the set into $k$ \emph{congruent} subsets by using $k$ \emph{inradius segments} (which realize the inradius of the set), see Figure~\ref{fig:st}.
In view of this result, for $k\geq 3$, the corresponding standard $k$-partition can be considered as
the optimal and more natural $k$-partition for the maximum relative diameter,
since it provides the minimal value for this geometric functional.

Apart from the above references, we mention that the maximum relative diameter functional
has been also studied in other works, obtaining interesting \emph{isodiametric inequalities}
for subdivisions into two connected subsets.
They can be found in \cite{cms2004} in the Euclidean setting,
or in \cite{cmss2010} (see also~\cite{css2013}) for compact, convex surfaces in $\rr^3$,
in some cases under the additional hypothesis of $2$-rotational symmetry.

In this context, it may happen that a given planar convex body is rotationally symmetric
under the rotation of \emph{several} angles about the center of symmetry of the set.
In this case, the invariance with respect to the different rotations gives a richer structure to the set.
For instance, a square is invariant with respect to the rotations of angles $\pi$ and $\pi/2$,
a regular hexagon is rotationally invariant for angles $\pi$, $2\pi/3$ and $\pi/3$,
and a circle is invariant under the rotation of any angle we consider. The sets satisfying
this property will be called \emph{multi-rotationally symmetric}.
We stress that, apart from the regular polygons, there are plenty of multi-rotationally symmetric planar convex bodies,
as explained in Remark~\ref{re:plenty}.

In this work we continue the study of the maximum relative diameter functional,
comparing the different values of this functional associated to the natural decompositions
of a given multi-rotationally symmetric planar convex body.
More precisely, let $C$ be a multi-rotationally symmetric planar convex body,
which is $k$-rotationally symmetric for $k\in\{k_1,\ldots,k_n\}$, with $k_i\in\nn$, $k_i\geq2$.
For each $k_i$, we can consider the standard $k_i$-partition $P_{k_i}$ associated to $C$, and the corresponding
values for the maximum relative diameter $d_M(P_{k_1}),\ldots,d_M(P_{k_n})$.
The aim of this paper is comparing these values, in order to determine which is
the minimal one, or when all of them coincide. This comparison is interesting since it
will suggest which of the optimal $k$-partitions of $C$ is the best one, in terms of the diameter.

A priori, if $k_1<\ldots<k_n$, one could think that the previous values satisfy
\begin{equation}
\label{eq:intro}
d_M(P_{k_1})>\ldots>d_M(P_{k_n}),
\end{equation}
since any subset given by $P_{k_i}$ is \emph{strictly} contained (up to rotation)
in a subset given by $P_{k_j}$ when $k_i>k_j$ (see Figure~\ref{fig:st}),
and so the monotonicity property of the diameter functional would yield that chain of \emph{strict} inequalities.

\begin{figure}[h]
 \includegraphics[width=0.73\textwidth]{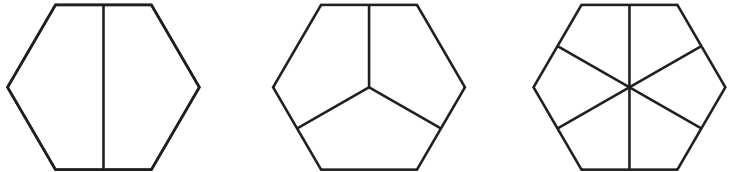}\\
  \caption{The standard $2$-partition, $3$-partition and $6$-partition for the regular hexagon}
  \label{fig:st}
\end{figure}

However, the expression~\eqref{eq:intro} is not true in general. As a consequence of Lemma~\ref{le:chain}, we shall see that
the minimal value in $\{d_M(P_{k_1}),\ldots,d_M(P_{k_n})\}$ is attained by $d_M(P_{k_n})$,
but not \emph{uniquely}. In fact, we shall check that some equality sign may appear in \eqref{eq:intro},
and we shall study when \eqref{eq:intro} is a chain of \emph{equalities}.
This situation is specially interesting since it implies that, in terms of the maximum relative diameter,
it does not matter which of the natural $k$-partitions of the set is considered,
since all of them will provide the same value for the functional. In other words,
the number $k$ of subsets given by a standard $k$-partition does not have influence for the maximum relative diameter in this case.
On the other hand, we shall also characterize when the referred minimal value is achieved uniquely.

One of the main tools used in this paper is Lemma~\ref{le:dM}, proved in \cite[Lemma~3.2]{extending},
which allows to compute the maximum relative diameter associated
to the standard $k$-partition $P_{k}$ of a $k$-rotationally symmetric planar convex body $C$
in a very easy way, when $k\geq3$. More precisely, we have that
$$d_M(P_k)=\max\{R,2\,r\sin(\pi/k)\},$$
where  $R$, $r$ are the circumradius and the inradius of $C$, respectively.
We point out that the case $k=2$ is special, since the previous formula does not hold,
and so it needs a particular treatment (recall that even a minimizing $2$-partition
is not completely characterized).

We have organized the paper as follows. In Section~\ref{sec:pre} we give the
precise definitions for posing our problem, obtaining also some technical results
that will be used later on.
In particular, we prove Lemma~\ref{le:divisors},
which describes the rotational properties of the sets of our family:
let $C$ be a multi-rotationally symmetric planar convex body,
which is $k$-rotationally symmetric for $k\in\{k_1,\ldots,k_n\}$,
being $k_n$ the largest natural number therein.
Then we have that each number $k_i$ is necessarily a divisor of $k_n$,
which will be called the maximal degree of $C$. This result implies that
for a multi-rotationally symmetric set, its associated maximal degree determines entirely
its rotationally invariant behavior.

In Section~\ref{sec:main} we prove the main results of the paper.
We shall distinguish two possibilities in our study, essentially because the case $k=2$
requires a different treatment. For a given multi-rotationally symmetric planar convex body $C$
with maximal degree $k_n\in\nn$, let $\{1,k_1,\ldots,k_n\}$ be the set of all the divisors of $k_n$,
with $k_1<\ldots<k_n$. 
In this setting, the number $k_1$ will be called the minimal degree of the set,
and will be important along our work.
We firstly assume in Subsection~\ref{subsec:k=3} that $k_1\geq3$.
In this case, we obtain in Lemma~\ref{le:chain} that
\begin{equation}
\label{eq:intro2}
d_M(P_{k_1})\geq\ldots\geq d_M(P_{k_n}),
\end{equation}
where $P_{k_i}$ represents the standard $k_i$-partition associated to $C$, $i=1,\ldots,n$.
We shall see with some examples that in the above expression \eqref{eq:intro2}
some inequalities may be strict and other ones may be equalities,
characterizing in Lemma~\ref{le:igualdad} the interesting situation where \emph{all of them}
are equalities:
\begin{quotation}
\emph{Let $C$ be a multi-rotationally symmetric planar convex body, with minimal degree $k_1\in\nn$, $k_1\geq3$.
Let $R$ be the circumradius of $C$.
Then \eqref{eq:intro2} is a chain of equalities if and only if $d_M(P_{k_1})=R$. }
\end{quotation}
This result, whose demonstration relies on Lemma~\ref{le:dM} (previously described and not valid when $k_1=2)$,
will lead us to Theorem~\ref{th:main1}, where we prove that the above condition only holds when the minimal degree is \emph{relatively} large:
\begin{quotation}
\emph{Let $C$ be a multi-rotationally symmetric planar convex body, with minimal degree $k_1\in\nn$, $k_1\geq3$.
Then, \eqref{eq:intro2} is a chain of equalities if and only if the minimal degree of $C$ is
greater than or equal to $7$.}
\end{quotation}
We shall finally obtain the following characterization in Theorem~\ref{th:main2} in terms of the maximal degree:
\begin{quotation}
\emph{Let $C$ be a multi-rotationally symmetric planar convex body, with minimal degree $k_1\in\nn$, $k_1\geq3$.
Then, \eqref{eq:intro2} is a chain of equalities if and only if the maximal degree of $C$ is
a product of prime numbers, all of them greater than or equal to $7$.}
\end{quotation}

We conclude Subsection~\ref{subsec:k=3} studying, in view of \eqref{eq:intro2},
when the minimal value in $\{d_M(P_{k_1}),\ldots,d_M(P_{k_n})\}$ is attained \emph{uniquely} by $d_M(P_{k_n})$.
In Lemma~\ref{le:minimal} we prove that this is equivalent to $d_M(P_{k_{n-1}})\neq R$,
where $R$ is the circumradius of the considered set.

In Subsection~\ref{subsec:k=2} we focus on the special case $k_1=2$.
In this situation, we have to take into account
that the associated standard $2$-partition $P_2$ is not minimizing in general,
as shown in the example from Figure~\ref{fig:dos}.
Therefore, $P_2$ cannot be considered the optimal $2$-partition for the maximum relative diameter,
and the quantity $d_M(P_2)$ will not represent the corresponding minimal value.
This means that in this case, the relations in $\{d_M(P_{k_1}),\ldots,d_M(P_{k_n})\}$
will not refer to the minimal values of our functional, as in Subsection~\ref{subsec:k=3}.
Anyway, we obtain in Lemma~\ref{le:2} that $d_M(P_{k_1})$ is the \emph{largest} value in this situation,
and consequently
$$d_M(P_{k_1})>d_M(P_{k_2})\geq\ldots\geq d_M(P_{k_n}),$$
which differs from~\eqref{eq:intro2} in the first inequality, which is always \emph{strict}.












\section{Preliminaries}
\label{sec:pre}

In this work we shall focus on rotationally symmetric planar convex bodies, assuming then the compactness of the sets.
We recall that, given $k\in\nn$, $k\geq2$, a planar convex body $C$ is said to be \emph{$k$-rotationally symmetric} if there exists a point $p\in C$
such that $C$ is invariant under the rotation of angle $2\pi/k$ about $p$. In this setting, $p$ will be referred to as the
\emph{center of symmetry} of $C$. This notion naturally suggests the following definition.

\begin{definition}
Let $C$ be a planar convex body. We will say that $C$ is multi-rotationally symmetric
if it is $k$-rotationally symmetric for more than one value of $k$.
\end{definition}

For instance, the circle is multi-rotationally symmetric since it is $k$-rotatio\-nally symmetric for any $k\in\nn$, 
and the square is also multi-rotationally symmetric since it is $2$-rotationally symmetric and $4$-rotationally symmetric.
Moreover, the regular decagon is $k$-rotationally symmetric for $k\in\{2,5,10\}$, and so it is multi-rotationally symmetric.
We stress that any set of this type possesses a rich geometric structure,
inherited by the different existing rotational symmetries leaving invariant the set.
\begin{figure}[ht]
    \includegraphics[width=0.7\textwidth]{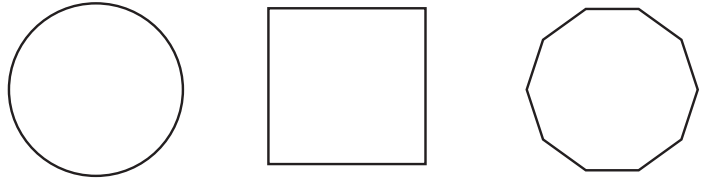}\\
  \caption{Some multi-rotationally symmetric planar convex bodies: the circle, the square and the regular decagon}
  \label{fig:examples}
\end{figure}

\begin{remark}
Any regular polygon $E_m$ of $m$ edges is $k$-rotationally symmetric for any divisor $k$ of $m$. Hence, if $m$ is not a prime number,
$E_m$ is multi-rotationally symmetric.
\end{remark}

\begin{remark}
\label{re:plenty}
Examples of multi-rotationally symmetric planar convex bodies can be constructed by the following procedure:
for any $k\in\nn$, $k\geq2$, consider the circular sector of angle $2\pi/k$, modify the curved piece of the boundary
and apply successively $k-1$ times the rotation of angle $2\pi/k$, in such a way that the resulting set $C$ is convex, see Figure~\ref{fig:modification2}.
In that case, if $k$ is not a prime number, then $C$ is multi-rotationally symmetric.
\end{remark}

\begin{figure}[h]
    \includegraphics[width=0.5\textwidth]{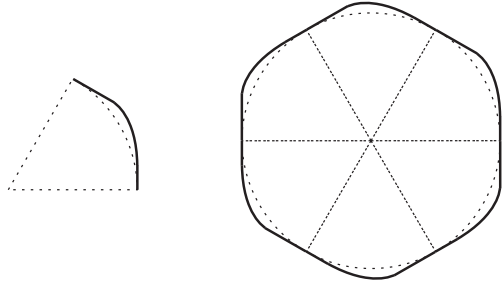}\\
  \caption{A modified circular sector of angle $2\pi/6$,
  and the resulting multi-rotationally symmetric planar convex body}
  \label{fig:modification2}
\end{figure}

Note that any multi-rotationally symmetric planar convex body is, at least,
$k$-rotationally symmetric for two different values of $k$.
If the set is not a circle, there will be a remarkable natural number associated to the set, which is defined below.
We shall see in Lemma~\ref{le:divisors} that this number determines the rotational properties of the set.

\begin{definition}
Let $C$ be a multi-rotationally symmetric planar convex body different from a circle.
The largest natural number $k$ for which $C$ is $k$-rotationally symmetric will be called the
maximal degree of $C$, and will be denoted by $k_C$.
\end{definition}

\begin{remark}
\label{re:circle1}
Any circle $\mathscr{C}$ can be seen as a \emph{degenerate} multi-rotationally symmetric set,
since it is $k$-rotationally symmetric for any $k\in\nn$, $k\geq 2$,
and so its associated maximal degree $k_\mathscr{C}$ could be set as $+\infty$. 
In fact, the circles are the only sets with this property. 
\end{remark}

\begin{lemma}
\label{le:divisors}
Let $C$ be a multi-rotationally symmetric planar convex body, with maximal degree $k_C\in\nn$.
Then $C$ is $k$-rotationally symmetric for $k\in\nn$ if and only if $k$ is a divisor of $k_C$ (greater than $1$).
\end{lemma}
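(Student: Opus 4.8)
The plan is to prove both implications separately, using the group-theoretic structure of the rotations leaving $C$ invariant. Let $G$ be the set of angles $\theta\in[0,2\pi)$ such that the rotation $\rho_\theta$ of angle $\theta$ about the center of symmetry $p$ maps $C$ onto itself. Since $C$ is compact and convex (hence bounded) and different from a circle, $G$ is a finite subgroup of the circle group $SO(2)\cong\rr/2\pi\zz$; finiteness is the crucial point, and it follows because a rotationally symmetric planar convex body that admits rotations of arbitrarily small angle must be a circle (its boundary would contain, for each boundary point, a dense set of boundary points on the circle through that point centered at $p$, and by closedness the whole circle). A finite subgroup of $SO(2)$ is cyclic, generated by the rotation of angle $2\pi/N$ for some $N\in\nn$; I would identify this $N$ with the maximal degree $k_C$. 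Concretely, $C$ is $k$-rotationally symmetric exactly when $\rho_{2\pi/k}\in G$, i.e.\ when $2\pi/k$ is an integer multiple of $2\pi/N$, which happens if and only if $k$ divides $N$.

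First I would prove the easy direction: if $k$ is a divisor of $k_C$ with $k>1$, then writing $k_C=k\,m$ for $m\in\nn$, the rotation of angle $2\pi/k=2\pi m/k_C$ is the $m$-fold composition of the rotation of angle $2\pi/k_C$, hence leaves $C$ invariant; so $C$ is $k$-rotationally symmetric. Conversely, suppose $C$ is $k$-rotationally symmetric for some $k\in\nn$ (necessarily $k\geq2$, and $k>1$ is automatic). By definition of the maximal degree, $k\leq k_C$. I need $k\mid k_C$. Here I would invoke the finiteness of $G$ and the fact that $G$ is cyclic of order $k_C$: both $\rho_{2\pi/k}$ and $\rho_{2\pi/k_C}$ lie in $G$, and the subgroup of $G$ generated by $\rho_{2\pi/k}$ has order $k$ and must divide $|G|=k_C$ by Lagrange's theorem, giving $k\mid k_C$. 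Alternatively, and avoiding the group language, I would argue directly: let $g=\gcd(k,k_C)$ and write $g=ak+bk_C$ with $a,b\in\zz$ (Bézout); then $\rho_{2\pi/k}^{a}\circ\rho_{2\pi/k_C}^{b}$ is the rotation of angle $2\pi(a/k+b/k_C)=2\pi g/(k\,k_C)$, which leaves $C$ invariant. If this rotation has angle $2\pi/q$ for the integer $q=k\,k_C/g=\mathrm{lcm}(k,k_C)$, then $C$ is $q$-rotationally symmetric with $q\geq k_C$; maximality forces $q=k_C$, hence $\mathrm{lcm}(k,k_C)=k_C$, i.e.\ $k\mid k_C$.

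The main obstacle is establishing that $G$ is finite (equivalently, that a non-circular rotationally symmetric planar convex body cannot admit rotations of arbitrarily small angle); everything else is elementary number theory. I would handle this by the closedness-and-density argument sketched above: if $G$ were infinite, it would be dense in $SO(2)$, so for any boundary point $x$ of $C$, the orbit of $x$ under $G$ would be dense in the circle of radius $|x-p|$ about $p$; since $\partial C$ is closed, that whole circle would lie in $\partial C$, forcing $\partial C$ to be a single circle and $C$ a disk, contrary to hypothesis. One should also note the degenerate reading for the circle is excluded by the standing assumption $k_C\in\nn$ (cf.\ Remark~\ref{re:circle1}), so no separate case is needed. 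Assembling the two implications completes the proof.
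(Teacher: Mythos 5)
Your proposal is correct, and your second, ``Bézout'' argument is essentially the paper's own proof: compose integer powers of the rotations of angles $2\pi/k$ and $2\pi/k_C$ to produce a rotation of smaller angle preserving $C$, then invoke maximality of $k_C$. In fact your version is \emph{more} careful than the paper's: the paper asserts that if $k$ does not divide $k_C$ then $k$ and $k_C$ are coprime and solves $ka+k_Cb=1$, which is false in general (take $k=4$, $k_C=6$); your use of $g=\gcd(k,k_C)$ and the conclusion that $C$ is $\mathrm{lcm}(k,k_C)$-rotationally symmetric, whence $\mathrm{lcm}(k,k_C)=k_C$ by maximality, is the correct repair, and it reaches the same contradiction. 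Your first route (the symmetry group $G$ is a finite cyclic subgroup of $SO(2)$ of order $k_C$, so Lagrange gives $k\mid k_C$) is a genuinely different and slightly more structural argument, but it forces you to prove finiteness of $G$ via the density/closedness argument; that step, while correct, is not needed for the Bézout route, since the hypothesis $k_C\in\nn$ already encodes that a maximal degree exists and the circle case is excluded by Remark~\ref{re:circle1}. In short: both of your arguments work, the second coincides with (and quietly fixes) the paper's, and the first buys conceptual clarity at the price of an extra compactness/density lemma.
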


\begin{proof}
It is clear that if a natural number $k>1$ is a divisor of $k_C$, then $C$ is $k$-rotationally symmetric.
Assume now that $C$ is $k$-rotationally symmetric, but $k$ is not a divisor of $k_C$.
Then $k$ and $k_C$ are coprime numbers, and so
we can find $a,b\in\zz-\{0\}$ solving the diophantine equation $k\, a+k_C\,b=1$ (see~\cite[Th.~1.7]{biggs}),
which gives
\[
\displaystyle{\varphi_{b\frac{2\pi}{k}}\circ\varphi_{a\frac{2\pi}{k_C}}=\varphi_{\frac{2\pi}{k\,k_C}}},
\]
where $\varphi_\alpha$ denotes the rotation of angle $\alpha$ about the center of symmetry of $C$.
Such an equality implies that $C$ is $(k\,k_C)$-rotationally symmetric,
which is contradictory since $k\,k_C>k_C$, and $k_C$ is the maximal degree of $C$.
\end{proof}
%
%
%
%


We now define the \emph{minimal degree} of a multi-rotationally symmetric planar convex body,
which will play an important role in Section~\ref{sec:main}.

\begin{definition}
Let $C$ be a multi-rotationally symmetric planar convex body.
The smallest natural number $k$ 
for which $C$ is $k$-rotationally symmetric will be called the
minimal degree of $C$, and will be denoted by $\chi_C$.
\end{definition}

\begin{remark}
\label{re:prime}
We point out that for any multi-rotationally symmetric planar convex body $C$,
the minimal degree $\chi_C$ is a prime number (greater than $1$), 
which will be equal to $2$ if the maximal degree of $C$ is \emph{even}, in view of Lemma~\ref{le:divisors}.
\end{remark}

The following definition describes the decompositions we shall consider for multi-rotationally
symmetric planar convex bodies.
Since this kind of sets have a special interior point (which is the center of symmetry),
it is natural, in some sense, working with a particular type of divisions called \emph{$k$-partitions}, where $k\in\nn$, see~\cite{extending}.

\begin{definition}
Let $C$ be a $k$-rotationally symmetric planar convex body, where $k\in\nn$, $k\geq2$.
A $k$-partition of $C$ is a decomposition of $C$ into $k$ connected subsets,
given by $k$ curves starting at an interior point of $C$ and meeting the boundary of $C$ at different points.
\end{definition}

\begin{remark}
We stress that, in the previous definition, the interior point of a $k$-partition
does not coincide, in general, with the center of symmetry of the set, and moreover,
the corresponding subsets need not enclose equal areas.
\end{remark}
\vspace{-2mm}

\begin{figure}[h]
   \includegraphics[width=0.77\textwidth]{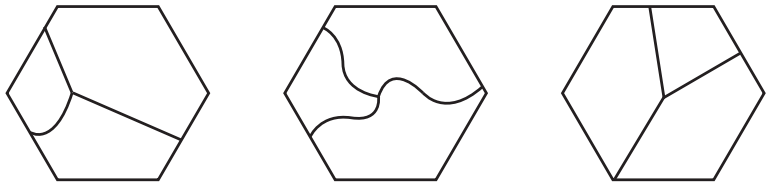}\\
  \caption{Three different $3$-partitions for the regular hexagon}
  \label{fig:partitions}
\end{figure}

We now recall the definition of the maximum relative diameter functional,
which is given by means of the classical diameter functional.

\begin{definition}
Let $C$ be a $k$-rotationally symmetric planar convex body, and let $P$ be a $k$-partition of $C$ into subsets $C_1,\ldots,C_k$.
The maximum relative diameter associated to $P$ is given by
\[
d_M(P,C)=\max\{D(C_i):i=1,\ldots,k\},
\]
where $D(C_i)=\max\{d(x,y):x,y\in C_i\}$ denotes the Euclidean diameter of $C_i$.
\end{definition}

\begin{remark}
Notice that the maximum relative diameter $d_M(P,C)$ associated to a $k$-partition $P$ of $C$
represents the largest distance in the subsets determined by $P$,
whose existence is assured due to the classical Weierstrass theorem. 
If no confusion may arise, we shall simply denote it by $d_M(P)$.
\end{remark}

Given a $k$-rotationally symmetric planar convex body $C$,
an interesting question is the study of the minimizing $k$-partitions
for the maximum relative diameter $d_M$.
That is, among all the $k$-partitions of $C$, we search for the ones providing the minimal possible value for $d_M$.
A complete characterization of a particular minimizing $k$-partition
has been recently obtained when $k\geq3$~\cite[Th.~4.5]{extending}.
We shall describe the construction of this remarkable minimizing $k$-partition,
called \emph{standard $k$-partition}, which can be considered as the optimal $k$-partition
for the maximum relative diameter functional when $k\geq3$.

%
%


Let $C$ be a $k$-rotationally symmetric planar convex body, with $k\geq3$,
and let $p$ be the center of symmetry of $C$.
Let $x_1,\ldots,x_k$ be points in $\ptl C$ at minimal distance to $p$,
which can be assumed symmetrically placed along $\ptl C$.
By considering the line segments $\overline{p\,x_i}$ (joining $p$ with each point $x_i$)
we will obtain a $k$-partition of $C$ into $k$ connected congruent subsets, see Figure~\ref{fig:standards}.
This $k$-partition is called the \emph{standard $k$-partition} associated to $C$, and will
be denoted by $P_k(C)$, or simply $P_k$.
The points $x_1,\ldots,x_n$ are called the \emph{endpoints} of $P_k$.

\begin{figure}[h]
    \includegraphics[width=0.75\textwidth]{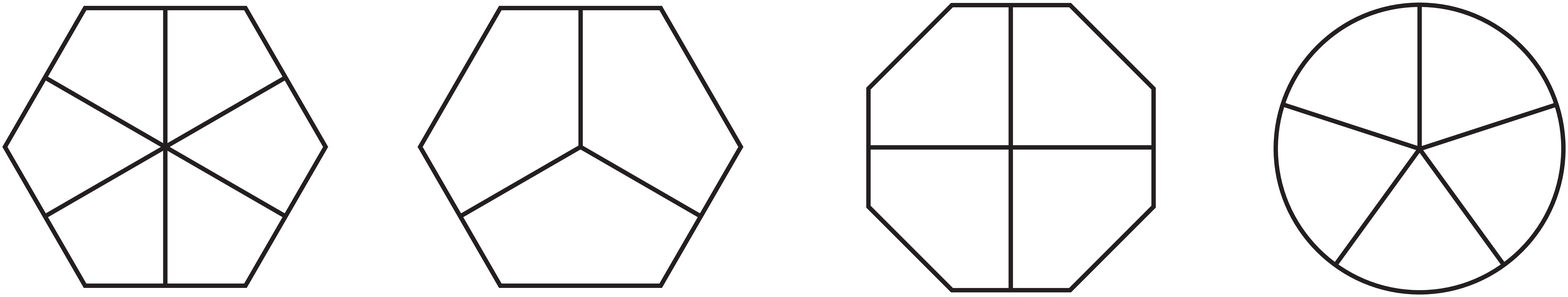}\\
  \caption{Standard $6$-partition and $3$-partition for the regular hexagon, standard $4$-partition for the regular octogon,
  and standard $5$-partition for the circle}
  \label{fig:standards}
\end{figure}

As commented previously, the standard $k$-partition plays an important role in this setting since
it minimizes the maximum relative diameter functional when $k\geq3$~\cite[Th.~4.5]{extending}.
Therefore, we can think about it as the most appropriate $k$-partition
for any $k$-rotationally symmetric planar convex body, when $k\geq3$.

The following lemma allows to compute easily the maximum relative diameter
associated to any standard $k$-partition, for $k\geq3$.

\begin{lemma}\cite[Lemma~3.2]{extending}
\label{le:dM}
Let $C$ be a $k$-rotationally symmetric planar convex body, with $k\geq 3$,
and let $P_k$ be its associated standard $k$-partition.
Then,
\[
d_M(P_k,C)=\max\{R,2\,r\sin(\pi/k)\},
\]
where $r$ and $R$ are the inradius and the circumradius of $C$, respectively.
\end{lemma}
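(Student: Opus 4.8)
The plan is to show that every one of the $k$ congruent pieces $C_1,\dots,C_k$ of $P_k$ has diameter exactly $\max\{R,2r\sin(\pi/k)\}$; since $d_M(P_k,C)=D(C_1)$, the lemma follows. For $k\ge 3$ one uses the facts built into the construction of $P_k$: the incircle and circumcircle of $C$ are centered at the center of symmetry $p$ (their centers are fixed by the rotation of angle $2\pi/k$, whose only fixed point is $p$), and the endpoints $x_1,\dots,x_k$ of $P_k$ are the contact points of the incircle with $\ptl C$. For the lower bound: the two endpoints $x_1,x_2$ bounding $C_1$ lie on the circle of radius $r$ about $p$ and subtend the angle $2\pi/k$ there, so $|x_1x_2|=2r\sin(\pi/k)$ with $x_1,x_2\in C_1$; and any $w\in\ptl C$ with $|pw|=R$ has an orbit of $k$ points under the rotation, equally spaced by $2\pi/k$, exactly one of which lies in $C_1$ (in the interior of the corresponding wedge, since the dividing segments reach only distance $r<R$), giving $D(C_1)\ge R$. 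Thus $D(C_1)\ge\max\{R,2r\sin(\pi/k)\}$.

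For the reverse inequality I would enclose $C_1$ in an explicit slightly larger convex region. The line $\ell_i$ tangent to the incircle at $x_i$ supports $C$ at $x_i$ (a convex body that contains a disc tangent to a line at a point of its boundary is supported there by that line), so $C$ is contained in the intersection of $B(p,R)$ with the regular $k$-gon circumscribed about the incircle, whose consecutive edges $\ell_i,\ell_{i+1}$ meet at a vertex $z_i$ with $|pz_i|=r/\cos(\pi/k)$; in particular $R\le r/\cos(\pi/k)$. Intersecting with the wedge $W$ of opening $2\pi/k$ bounded by the segments $\overline{px_1},\overline{px_2}$, we get $C_1=C\cap W\subseteq T$, where $T$ is the kite $p\,x_1\,z_1\,x_2$ with its corner at $z_1$ cut off by the circle $\ptl B(p,R)$ (when $R=r/\cos(\pi/k)$ no cutting occurs and $T$ is the full kite). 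So it suffices to prove $D(T)\le\max\{R,2r\sin(\pi/k)\}$. The diameter of this explicit region is attained between two of its extreme points — $p$, $x_1$, $x_2$, the points $y_j:=\ell_j\cap\ptl B(p,R)$ lying in $W$, or a point of the intervening circular arc (whose half-angle is $\delta=\pi/k-\arccos(r/R)\ge0$). Any distance to $p$ is $\le R$; the distances $|x_1x_2|=2r\sin(\pi/k)$, $|x_1z_1|=|x_2z_1|=r\tan(\pi/k)\le 2r\sin(\pi/k)$ (as $\cos(\pi/k)\ge\frac12$), and (between two arc points) $\le 2R\sin\delta=2r\sin(\pi/k)-2\cos(\pi/k)\sqrt{R^2-r^2}\le 2r\sin(\pi/k)$ are immediate; the one genuinely delicate case is the distance from $x_1$ (or $x_2$) to the arc, which is largest at the far endpoint $y_2$, where $|x_1y_2|^2=r^2+R^2-2rR\cos(2\pi/k-\arccos(r/R))$, and the similar quantity $|x_1y_1|=\sqrt{R^2-r^2}$. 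Substituting $\cos(\arccos(r/R))=r/R$, both of these are $\le 2r\sin(\pi/k)$ once one knows $R^2\le r^2\bigl(1+4\sin^2(\pi/k)\bigr)$, and this follows from $R\le r/\cos(\pi/k)$ together with the elementary inequality $\sec^2(\pi/k)\le 1+4\sin^2(\pi/k)$, which holds because $\sin^2(\pi/k)\le\frac34$ for $k\ge3$. Hence $D(T)\le\max\{R,2r\sin(\pi/k)\}$, and with the lower bound this gives $D(C_1)=\max\{R,2r\sin(\pi/k)\}$.

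The main obstacle is the upper bound: one needs the support-line property of the $\ell_i$ and the bound $R\le r/\cos(\pi/k)$ to make the inclusion $C_1\subseteq T$ honest, and then the estimate on $|x_1y_2|$, which — as shown above — collapses to $R^2\le r^2(1+4\sin^2(\pi/k))$; everything else in computing $D(T)$ is routine. Sharpness of the formula is then immediate, since for the ``rounded regular $k$-gon'' whose boundary coincides with $\ptl T$ in each sector one has $C_1=T$ exactly. Note that $k=2$ is genuinely excluded: then $W$ is a half-plane, the lines $\ell_1,\ell_2$ are parallel (no $z_1$), and the inequality $R\le r/\cos(\pi/k)$ degenerates — consistent with the formula failing for $k=2$.
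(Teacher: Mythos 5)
This lemma is imported verbatim from \cite[Lemma~3.2]{extending}; the paper you are being compared against contains no proof of it, only the citation, so there is nothing internal to match your argument against. Judged on its own, your proof is correct and complete. The lower bound is clean: $x_1,x_2\in C_1$ give $2r\sin(\pi/k)$, and $p$ together with the rotate of a farthest boundary point landing in $C_1$ gives $R$ (both the incenter and circumcenter coincide with $p$ by the symmetry, as you note). For the upper bound, the two structural inputs — that the tangent to the incircle at each $x_i$ supports $C$ (this is exactly Lemma~\ref{le:angle} of the present paper, which you reprove via the convex hull of the indisk and a putative outlying point) and the consequent bound $R\le r/\cos(\pi/k)$ — legitimize the inclusion $C_1\subseteq T$, and your case analysis of $D(T)$ over the extreme points $p$, $x_1$, $x_2$ and the arc is exhaustive. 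The only step you state more tersely than you verify is the bound $|x_1y_2|\le 2r\sin(\pi/k)$: it does follow from $\sqrt{R^2-r^2}\le 2r\sin(\pi/k)$, but one still needs a line of trigonometry, e.g.\ the law of cosines in the triangle $x_1x_2y_2$ with angle $\pi/k$ at $x_2$ gives $|x_1y_2|^2-|x_1x_2|^2=b\bigl(b-2a\cos(\pi/k)\bigr)\le 0$ for $b\le a$ and $\cos(\pi/k)\ge\tfrac12$; this is exactly where the hypothesis $k\ge3$ enters, consistent with your closing remark that $k=2$ degenerates. Your enclosing region $T$ (disk cap of the circumscribed kite) is a natural and arguably more geometric device than a purely computational case check, and it yields sharpness for free via the rounded regular $k$-gon.
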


We finish this section with the following result, which will be used later.

\begin{lemma}
\label{le:angle}
Let $C$ be a $k$-rotationally symmetric planar convex body, being $p$ its center of symmetry.
Let $x\in\ptl C$ be an endpoint of the standard $k$-partition associated to $C$.
Let $s$ be the line orthogonal to the segment $\overline{p\,x}$ passing through $x$.
Then $s$ is a supporting line of $C$.
\end{lemma}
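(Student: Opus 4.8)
The plan is to exploit the fact that, by its very construction, the endpoint $x$ realizes the minimal distance from the center of symmetry $p$ to $\ptl C$. Put $r=d(p,x)=\min\{d(p,y):y\in\ptl C\}$; this minimum is attained because $\ptl C$ is compact, and it is strictly positive since $p$ belongs to $\inte C$. (That the center of symmetry of a $k$-rotationally symmetric planar convex body is always an interior point is standard: if $p$ were in $\ptl C$, a supporting line of $C$ at $p$ together with its images under the $k$ rotations about $p$ leaving $C$ invariant would confine $C$ to the intersection of those half-planes through $p$, a set with empty interior, contrary to $C$ being a convex body.)

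The first step I would carry out is to check that the closed disk $B$ centered at $p$ with radius $r$ satisfies $B\subseteq C$. Indeed, suppose some $y\in B$ were not in $C$. Since $p\in\inte C$ and $y\notin C$, the segment $\overline{p\,y}$ meets $\ptl C$ at a point $z=p+t(y-p)$ with $t\in(0,1]$, so that $d(p,z)=t\,d(p,y)\leq d(p,y)\leq r$. By minimality of $r$ this forces $d(p,z)=r$, hence $t=1$ and $z=y$; but then $y\in\ptl C\subseteq C$, a contradiction. Thus $B\subseteq C$, and moreover $x\in\ptl B\cap\ptl C$, since $d(p,x)=r$.

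The key observation is then essentially free. As $x\in\ptl C$, there exists a supporting line $\ell$ of $C$ through $x$, so that $C$, and in particular $B$, lies in one of the two closed half-planes determined by $\ell$. Hence $\ell$ is a supporting line of the disk $B$ at the boundary point $x$. But a disk is strictly convex, so it admits a unique supporting line at each of its boundary points, namely the line through that point orthogonal to the radius joining it to the center. Consequently $\ell$ must coincide with the line through $x$ orthogonal to $\overline{p\,x}$, that is, $\ell=s$. This proves that $s$ is a supporting line of $C$, as claimed.

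I do not expect a genuine obstacle here: the whole content of the statement is the uniqueness of the supporting line of $B$ at $x$, and the only points that require (routine) care are that $p$ is an interior point of $C$, that the minimal distance from $p$ to $\ptl C$ is attained and positive, and the inclusion $B\subseteq C$.
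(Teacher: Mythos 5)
Your proof is correct, but it takes a genuinely different route from the paper's. The paper argues by a direct contradiction: assuming some boundary point $q$ lies strictly beyond $s$, it picks an auxiliary boundary point $x'$ near $x$ on the other side of the line through $p$ and $x$, and shows that the chord $\overline{x'\,q}$ violates either the convexity of $C$ or the minimality of $d(p,x)$, with a small case analysis on which side of $s$ the point $x'$ falls. You instead prove that the closed disk $B$ of radius $r=d(p,x)$ centered at $p$ is contained in $C$ and is internally tangent to $\ptl C$ at $x$; then any supporting line of $C$ at $x$ (whose existence is the supporting hyperplane theorem) also supports $B$ at $x$, and the strict convexity of the disk forces that line to be the tangent $s$. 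Your version is cleaner and more conceptual: it isolates the two facts actually needed (the inclusion $B\subseteq C$ and the uniqueness of the disk's supporting line at a boundary point), avoids the paper's positional case analysis, and makes transparent that the rotational symmetry is irrelevant beyond guaranteeing $p\in\inte C$, so the statement really holds for any interior point $p$ and any nearest boundary point $x$. What the paper's argument buys in exchange is that it stays entirely at the level of elementary segment arguments and never invokes the existence of a supporting line at $x$. Your auxiliary verifications (that $p$ is an interior point and that $B\subseteq C$) are both sound.
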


\begin{proof}
Let $s^+$, $s^-$ be the (open) halfplanes determined by $s$, with $p\in s^-$.
Assume that there exists $q\in\ptl C$ such that $q\in s^+$.
Let $t$ be the line passing through $p$ and $x$, and $t^+$, $t^-$ the associated (open) halfplanes,
with $q\in t^+$. Consider $x'\in\ptl C\cap t^-$ close to $x$.

If $x'$ lies in $s^+\cup\,s$, then there are points in the segment $\overline{x'\,q}$ which are not contained in $C$,
contradicting the convexity of $C$. On the other hand, if $x'\in s^-$, by considering again the segment $\overline{x'\,q}$
we will get a contradiction
with the convexity of $C$ or with the definition of $x$ (which is a point of $\ptl C$ at minimal distance to $p$).
Then any point in $\ptl C$ lies necessarily in $s^-\cup\,s$ and so $s$ is a supporting line.
\end{proof}

\section{Main results}
\label{sec:main}

%

In this section we shall prove the main results of the paper.
First of all, we shall state precisely our problem.
Let $C$ be a multi-rotationally symmetric planar convex body, with associated maximal degree $k_C\in\nn$.
In view of Lemma~\ref{le:divisors}, $C$ will be $k$-rotationally symmetric for any divisor $k$ of $k_C$ (greater than $1$).
Let $\{1,k_1=\chi_C,\ldots,k_n=k_C\}$ be the set of divisors of $k_C$, with $k_1<\ldots<k_n$.

In this setting, for each divisor $k_i$ of $k_C$, with $i=1,\ldots,n$,
we shall consider the standard $k_i$-partition $P_{k_i}$ associated to $C$,
which is the \emph{optimal} $k_i$-partition for the maximum relative diameter when $k_i\geq3$ \cite[Th.~4.5]{extending},
and the corresponding value $d_M(P_{k_i})$.
In this work we investigate the relation among the values $d_M(P_{k_1}),\ldots,d_M(P_{k_n})$.

%
%

At first glance, it could seem that 
\begin{equation}
\label{eq:schain}
d_M(P_{k_1})>\ldots>d_M(P_{k_n}),
\end{equation}
with strict inequalities, since when $k_i<k_j$, each subset of $C$ given by $P_{k_j}$ is contained,
up to a proper rotation (if necessary), in a subset provided by $P_{k_i}$, see Figure~\ref{fig:a}.
\vspace{-3mm}

\begin{figure}[h]
  \includegraphics[width=0.75\textwidth]{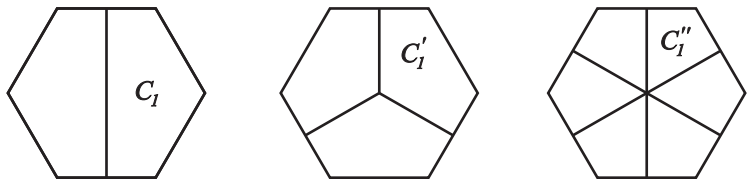}\\
  \caption{For the regular hexagon, the subsets $C_1$, $C_1'$ and $C_1''$,
  determined by the standard $2$-partition, $3$-partition and $6$-partition,
  are related by strict inclusions}
   \label{fig:a}
\end{figure}

However, we shall see that the above strict inequalities in \eqref{eq:schain} do not hold in general,
and it is even possible \emph{a chain of equalities} in \eqref{eq:schain} in some particular situations.
This case of equalities in \eqref{eq:schain} is specially interesting,
since it implies that the considered set can be divided in different natural ways
(into different numbers of connected subsets), yielding the \emph{same} value
for the maximum relative diameter.

We shall proceed by distinguishing two cases, depending on the minimal degree of our set.


\subsection{Minimal degree greater than 2}
\label{subsec:k=3}

The following result gives immediately a general chain of inequalities 
when the minimal degree of the multi-rotationally symmetric planar convex body is greater than $2$. 
The key result is Lemma~\ref{le:dM}, which only holds in this case.

\begin{lemma}
\label{le:chain}
Let $C$ be a multi-rotationally symmetric planar convex body,
with maximal degree $k_C$ and minimal degree $\chi_C\geq3$.
Let $\{k_1=\chi_C,\ldots,k_n=k_C\}$ be the set of divisors of $k_C$ greater than $1$, with $k_1<\ldots<k_n$.
Then,
\begin{equation}
\label{eq:chain}
d_M(P_{k_1})\geq\ldots\geq d_M(P_{k_n}),
\end{equation}
where $P_k$ denotes the standard $k$-partition associated to $C$.
\end{lemma}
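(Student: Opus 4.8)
The heart of the argument is Lemma~\ref{le:dM}, which is available precisely because every $k_i \geq \chi_C \geq 3$. By that lemma, for each divisor $k_i$ of $k_C$ we have
\[
d_M(P_{k_i}) = \max\{R,\, 2r\sin(\pi/k_i)\},
\]
where $r$ and $R$ are the inradius and circumradius of $C$, and these two quantities do \emph{not} depend on $i$. So the whole statement reduces to a monotonicity property of the single-variable function $k \mapsto \max\{R,\, 2r\sin(\pi/k)\}$ along the increasing chain $k_1 < \dots < k_n$. I would state this explicitly as the first step: since $d_M(P_{k_i})$ is obtained by the same formula for every $i$, it suffices to compare the values $2r\sin(\pi/k_i)$ as $i$ varies.

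Next I would carry out the elementary monotonicity check. On the interval $(0,\pi)$ the sine function is strictly increasing up to $\pi/2$ and then decreasing, but for $k \geq 3$ the argument $\pi/k$ lies in $(0,\pi/3] \subset (0,\pi/2)$, where $\sin$ is strictly increasing. Hence $k_i < k_j$ forces $\pi/k_i > \pi/k_j$, and therefore $2r\sin(\pi/k_i) \geq 2r\sin(\pi/k_j)$, with equality only if $r = 0$, which cannot happen for a convex body. Taking the maximum of each side with the common value $R$ preserves the inequality: from $2r\sin(\pi/k_i) \geq 2r\sin(\pi/k_j)$ we get
\[
\max\{R,\, 2r\sin(\pi/k_i)\} \geq \max\{R,\, 2r\sin(\pi/k_j)\},
\]
because $\max\{R,\cdot\}$ is a nondecreasing function of its second argument. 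Applying this with consecutive indices $i$ and $i+1$ for $i = 1,\dots,n-1$ and invoking Lemma~\ref{le:dM} at each step yields exactly the chain \eqref{eq:chain}.

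**Main obstacle.** Honestly there is no serious obstacle here: the lemma is a direct consequence of Lemma~\ref{le:dM} together with the triviality that $\max\{R,\cdot\}$ is monotone. The only point that deserves a word of care is the hypothesis $\chi_C \geq 3$: it is exactly what guarantees that each argument $\pi/k_i$ stays in the region $(0,\pi/2)$ where $\sin$ is monotone, and — more to the point — it is what licenses the use of Lemma~\ref{le:dM} in the first place, since that formula fails for $k = 2$. I would flag this explicitly so the reader sees why the case $\chi_C = 2$ (treated separately in Subsection~\ref{subsec:k=2}) genuinely needs a different argument, and why the inequalities here are only $\geq$ rather than the strict $>$ of \eqref{eq:schain} — the $R$ term can absorb and flatten out the strict gaps coming from the sine term.
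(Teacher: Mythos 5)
Your proof is correct and follows essentially the same route as the paper: apply Lemma~\ref{le:dM} to each divisor, use the monotonicity of $\sin$ on $(0,\pi/2)$ (guaranteed by $\chi_C\geq 3$) to get $\sin(\pi/k_1)>\ldots>\sin(\pi/k_n)$, and conclude via the monotonicity of $\max\{R,\cdot\}$. The paper's proof is just a one-line compression of exactly this argument.
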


\begin{proof}
Since $3\leq \chi_C=k_1<\ldots<k_n$, then $\sin(\pi/k_1)>\ldots>\sin(\pi/k_n)$,
and so $d_M(P_{k_1})\geq\ldots\geq d_M(P_{k_n})$ taking into account Lemma~\ref{le:dM}.
\end{proof}

\begin{remark}
\label{re:examples}
There are examples where \eqref{eq:chain} holds \emph{with strict inequalities}.
For instance, consider the regular nonagon $E_9$, which is $k$-rotationally symmetric for $k\in\{3,9\}$.
It is easy to check that $$d_M(P_3,E_9)=2\,r\sin(\pi/3)>R=d_M(P_9,E_9).$$
An analogous behavior occurs for the regular polygon $E_{15}$ of fifteen edges.
On the other hand, for some multi-rotationally symmetric planar convex bodies,
\eqref{eq:chain} may combine strict inequalities and equalities, as for the regular polygon $E_{45}$ of forty-five edges:
it is $k$-rotationally symmetric for $k\in\{3,5,15,45\}$, and straightforward computations give
$$d_M(P_3,E_{45})>d_M(P_5,E_{45})>d_M(P_{15},E_{45})=d_M(P_{45},E_{45}).$$
\end{remark}

In view of Remark~\ref{re:examples}, the following question arises:
which multi-rotationa\-lly symmetric planar convex bodies provide \emph{a chain of equalities} in \eqref{eq:chain}?
Note that for such a set, all its corresponding standard $k$-partitions will give the
same value for the maximum relative diameter. 
In other words, among all the natural ways of dividing the set, the minimum value for
the maximum relative diameter will be independent from the number of subsets of the division.
The following result will allow to characterize this kind of sets.

\begin{lemma}
\label{le:igualdad}
Let $C$ be a multi-rotationally symmetric planar convex body, with 
minimal degree $\chi_C\geq3$,  
and let $R$ be its circumradius.
Then, we have a chain of equalities in~\eqref{eq:chain} if and only if $d_M(P_{\chi_C})=R$.
\end{lemma}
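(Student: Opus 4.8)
The plan is to use Lemma~\ref{le:dM} to rewrite every term in the chain \eqref{eq:chain} explicitly in terms of $r$, $R$, and the degrees $k_i$, and then to observe that the chain collapses to equalities precisely when the ``$R$-branch'' of the maximum dominates already at the smallest degree. Concretely, for each $i$ we have $d_M(P_{k_i}) = \max\{R, 2r\sin(\pi/k_i)\}$, and since $k_1 = \chi_C < \cdots < k_n = k_C$ we have $\sin(\pi/k_1) > \cdots > \sin(\pi/k_n)$, so the values $2r\sin(\pi/k_i)$ are strictly decreasing while $R$ is constant.

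For the forward implication, suppose \eqref{eq:chain} is a chain of equalities, so all the $d_M(P_{k_i})$ share a common value $v$. Since $d_M(P_{k_i}) \ge R$ for every $i$ (the maximum includes $R$ as a candidate), we get $v \ge R$. Now if $v > R$, then for \emph{every} $i$ we would need $2r\sin(\pi/k_i) = v$, forcing $2r\sin(\pi/k_1) = 2r\sin(\pi/k_n)$; but $n \ge 2$ (a multi-rotationally symmetric body has at least two degrees, cf.\ the remark after Lemma~\ref{le:divisors}) and $\sin(\pi/k_1) > \sin(\pi/k_n)$, a contradiction. Hence $v = R$, i.e.\ $d_M(P_{\chi_C}) = d_M(P_{k_1}) = R$.

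For the converse, suppose $d_M(P_{\chi_C}) = R$, that is $\max\{R, 2r\sin(\pi/k_1)\} = R$, equivalently $2r\sin(\pi/k_1) \le R$. Since $\sin(\pi/k_i) < \sin(\pi/k_1)$ for $i \ge 2$, we get $2r\sin(\pi/k_i) < R$ for all $i \ge 2$ as well, so $d_M(P_{k_i}) = \max\{R, 2r\sin(\pi/k_i)\} = R$ for every $i = 1,\dots,n$. Thus all terms in \eqref{eq:chain} equal $R$, and the chain is a chain of equalities.

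\textbf{Main obstacle.} There is essentially no computational difficulty here; the one point requiring care is the use of $n \ge 2$ in the forward direction (so that there genuinely is a strictly smaller sine than $\sin(\pi/k_1)$ available), which is exactly the content of $C$ being multi-rotationally symmetric rather than merely $k$-rotationally symmetric for a single $k$. I would make that appeal explicit, citing the definition of multi-rotational symmetry and Lemma~\ref{le:divisors}. Everything else is a direct unwinding of the formula in Lemma~\ref{le:dM} together with the monotonicity of $\sin$ on $(0,\pi/3]$.
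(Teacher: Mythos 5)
Your proof is correct and follows essentially the same route as the paper: both directions reduce to Lemma~\ref{le:dM} together with the strict decrease of $\sin(\pi/k_i)$, the only cosmetic difference being that the paper's converse uses the chain \eqref{eq:chain} as a sandwich between $R$ and $\max\{R,2r\sin(\pi/k_n)\}\geq R$ while you verify each term directly. Your explicit appeal to $n\geq 2$ in the forward direction is a nice touch of rigor that the paper leaves implicit.
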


\begin{proof}
Let $k_C$ be the maximal degree of $C$,
and let $\{k_1=\chi_C,\ldots,k_n=k_C\}$ be the set of divisors of $k_C$ greater than $1$,
with $k_1<\ldots<k_n$.
Assume firstly that $d_M(P_{\chi_C})=R$. Then, \eqref{eq:chain} turns
$$R=d_M(P_{k_1})\geq d_M(P_{k_2})\geq\ldots\geq d_M(P_{k_n})=\max\{R,2\,r\sin(\pi/k_n)\}\geq R,$$
by using Lemma~\ref{le:dM}, where $r$ is the inradius of $C$.
Thus $d_M(P_{k_i})$ equals $R$, for any $i\in\{1,\ldots,n\}$.

Assume now that we have a chain of equalities in \eqref{eq:chain}. 
Due to Lemma~\ref{le:dM}, we have that $d_M(P_{k_{i}})=\max\{R,2\,r\sin(\pi/k_i)\}$, $i=1,\ldots,n$.
Taking into account that $\sin(\pi/k_1)>\sin(\pi/k_2)>\ldots>\sin(\pi/k_n)$,
the only admissible possibility in this case is $d_M(P_{k_i})=R$, for any $i\in\{1,\ldots,n\}$.
\end{proof}

The previous Lemma~\ref{le:igualdad} allows to find out the values of the minimal degree
for which all equalities hold in \eqref{eq:chain}.
The following result shows that the above condition is satisfied
when the minimal degree is greater than or equal to $6$.

\begin{lemma}
\label{le:6}
Let $C$ be a multi-rotationally symmetric planar convex body.
If the minimal degree $\chi_C$ of $C$ is greater than or equal to $6$, then $d_M(P_{\chi_C})=R$.
\end{lemma}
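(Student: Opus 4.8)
The plan is to reduce the statement to the inequality $2\,r\sin(\pi/\chi_C)\leq R$, which together with Lemma~\ref{le:dM} immediately gives $d_M(P_{\chi_C})=\max\{R,2\,r\sin(\pi/\chi_C)\}=R$. Since we are assuming $\chi_C\geq 6$, we have $\sin(\pi/\chi_C)\leq\sin(\pi/6)=1/2$, so it suffices to prove the clean geometric inequality $r\leq R$ relating the inradius and circumradius of $C$. This is of course well known for any planar convex body, but since we want a self-contained argument it is worth recording why: an incircle of radius $r$ is contained in $C$, and $C$ in turn is contained in its circumscribed circle of radius $R$; nesting a disc of radius $r$ inside a disc of radius $R$ forces $r\leq R$. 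With $r\leq R$ in hand, $2\,r\sin(\pi/\chi_C)\leq 2\,R\cdot(1/2)=R$, which is exactly what we need.

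More precisely, I would carry out the steps in the following order. First, invoke Lemma~\ref{le:dM}: since $\chi_C\geq 6\geq 3$, the standard $\chi_C$-partition $P_{\chi_C}$ is defined and $d_M(P_{\chi_C})=\max\{R,2\,r\sin(\pi/\chi_C)\}$. Second, observe that $\chi_C\geq 6$ implies $0<\pi/\chi_C\leq\pi/6$, hence $\sin(\pi/\chi_C)\leq 1/2$ by monotonicity of $\sin$ on $[0,\pi/2]$. Third, note $r\leq R$ for the convex body $C$ (incircle contained in $C$ contained in circumcircle). Combining the last two facts, $2\,r\sin(\pi/\chi_C)\leq 2R\cdot\tfrac12=R$, so the maximum in Lemma~\ref{le:dM} is attained by $R$, giving $d_M(P_{\chi_C})=R$ as claimed.

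There is essentially no obstacle here: the result is a short numerical consequence of Lemma~\ref{le:dM} once one notices that the threshold $\chi_C\geq 6$ is precisely calibrated so that $\sin(\pi/\chi_C)\leq\tfrac12$, which is the value that makes $2\,r\sin(\pi/\chi_C)$ drop below $R$ regardless of the ratio $r/R\leq 1$. The only point requiring the slightest care is confirming $r\leq R$, and even that could be cited as standard; alternatively one may recall that the endpoint of the standard partition realizes the inradius and lies on $\partial C$, hence within the circumcircle, giving $r\leq R$ directly. If a strict inequality were ever needed one would note $\sin(\pi/\chi_C)<\tfrac12$ whenever $\chi_C\geq 7$, but for the present statement the non-strict bound suffices.
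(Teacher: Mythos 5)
Your argument is correct and is essentially identical to the paper's own proof: both reduce to $\sin(\pi/\chi_C)\leq 1/2$ for $\chi_C\geq 6$, combine this with $r\leq R$ to get $2\,r\sin(\pi/\chi_C)\leq R$, and conclude via Lemma~\ref{le:dM}. No issues.
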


\begin{proof}
Let $R$ and $r$ be the circumradius and the inradius of $C$, respectively.
As $\chi_C\geq 6$, it is clear that $\sin(\pi/\chi_C)\leq\sin(\pi/6)=1/2$,
and so $2\,r\sin(\pi/\chi_C)\leq r\leq R$.
Then $d_M(P_{\chi_C})=\max\{R,2\,r\sin(\pi/\chi_C)\}=R$, by using Lemma~\ref{le:dM},
which yields the statement.
\end{proof}

We point out that, in view of Remark~\ref{re:prime}, the previous Lemma~\ref{le:6} applies
in fact for $\chi_C\geq7$, since $6$ is not a prime number.
On the other hand, when $\chi_C\leq 5$, we only have to analyze the cases $\chi_C=3$ and $\chi_C=5$,
since $4$ is neither prime.
The following results show that we do not have a chain of equalities in \eqref{eq:chain} in any of these two cases.

\begin{lemma}
\label{le:3}
Let $C$ be a multi-rotationally symmetric planar convex body, and let $R$ be its circumradius.
If the minimal degree $\chi_C$ of $C$ is equal to $3$, then $d_M(P_{\chi_C})\neq R$.
\end{lemma}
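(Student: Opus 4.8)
The plan is to exploit Lemma~\ref{le:dM}: since $\chi_C = 3$, the standard $3$-partition has $d_M(P_{\chi_C}) = \max\{R, 2r\sin(\pi/3)\} = \max\{R, \sqrt{3}\,r\}$. So the claim $d_M(P_{\chi_C}) \neq R$ will follow as soon as I show $\sqrt{3}\,r > R$ for every multi-rotationally symmetric planar convex body of minimal degree $3$; then $d_M(P_{\chi_C}) = \sqrt{3}\,r > R$, as desired. Note that minimal degree $3$ forces the maximal degree $k_C$ to be odd (else $\chi_C = 2$ by Remark~\ref{re:prime}) and divisible by $3$, so $C$ is $3$-rotationally symmetric but, crucially, $3$ is the smallest such value.

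The heart of the argument is the inradius--circumradius estimate $\sqrt{3}\,r > R$ for $3$-rotationally symmetric convex bodies. First I would fix the center of symmetry $p$ and an inradius endpoint $x$ of the standard $3$-partition, so $|px| = r$ and, by Lemma~\ref{le:angle}, the line $s$ through $x$ orthogonal to $\overline{px}$ is a supporting line of $C$; hence $C$ lies in the closed halfplane bounded by $s$ containing $p$. Applying the rotations $\varphi_{2\pi/3}$ and $\varphi_{4\pi/3}$ about $p$ produces three supporting lines whose three inradius endpoints $x_1,x_2,x_3$ are the vertices of an equilateral triangle inscribed in the circle of radius $r$ centered at $p$; the three halfplanes intersect in an equilateral triangle $T$ circumscribed about that circle, and $C \subseteq T$. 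The circumradius of such a triangle $T$ (distance from $p$ to a vertex of $T$) equals $2r$, so trivially $R \le 2r$, but that is not sharp enough. Instead I would bound $R$ by the distance from $p$ to the farthest point of $C$: any point of $C$ lies in $T$, and I must rule out $C$ reaching too close to the vertices of $T$.

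The refined step is where the \emph{minimality} of the degree $3$ must enter. If $C$ were contained in a disc of radius $R$ about $p$ with $R \ge \sqrt 3\, r$, I want to derive a contradiction — most naturally by showing $C$ would then be forced to have extra rotational symmetry or, better, by a direct geometric estimate. The cleanest route I foresee: the supporting line $s$ at $x$ together with convexity confines the arc of $\partial C$ over one "sector" of the $3$-partition to a region whose diameter from $p$ is at most the distance from $p$ to the point where $s$ meets the adjacent supporting line, which is exactly $2r$... so in fact $R \le 2r$ always, and I need the stronger strict bound $R < \sqrt 3 \, r$, equivalently each sector of the partition, being congruent and of angular width $2\pi/3$ at $p$, cannot have a boundary point at distance $\ge \sqrt 3\, r$ without violating convexity against the opposite sector's supporting structure. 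I would set this up with coordinates: place $x_1 = (r,0)$, $s_1 = \{X = r\}$, and the other two supporting lines accordingly; a boundary point $q$ of $C$ at distance $R$ from $p$ lies in $T$, and I would show that the segments joining $q$ to $x_2$ and $x_3$ (which must lie in $C$) combined with the three supporting-line constraints force $R < \sqrt 3\, r$, perhaps with equality only in a degenerate limiting configuration (the equilateral triangle itself, for which $R = 2r/\sqrt 3 \cdot ... $) — I should double-check the extremal shape.

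The main obstacle I anticipate is pinning down this sharp constant: I need the exact supremum of $R/r$ over $3$-rotationally symmetric convex bodies and must verify it is strictly below $\sqrt 3$ (the regular triangle has $R/r = 2$, which exceeds $\sqrt 3$, so the relevant bound is \emph{not} over all $3$-symmetric bodies — rather, I suspect the correct reading is that for bodies with minimal degree exactly $3$ one still only gets $R/r \le 2$, and the lemma instead follows because $d_M(P_{\chi_C}) = \max\{R,\sqrt 3\, r\}$ can equal $R$ only if $\sqrt 3 \, r \le R$, which for minimal degree $3$ I must exclude by the presence of a concrete obstruction). I would therefore hedge and, if the blanket inequality fails, fall back on analyzing the finitely many arithmetic possibilities for $k_C$ via Lemma~\ref{le:divisors} together with the circumscribed-triangle containment $C \subseteq T$; the decisive inequality to establish is simply $\sqrt{3}\,r > R$, and I expect the supporting-line argument of Lemma~\ref{le:angle}, pushed through all three rotated copies, to deliver it.
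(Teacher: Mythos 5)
Your reduction is correct as far as it goes: by Lemma~\ref{le:dM} the statement is equivalent to showing $R<\sqrt{3}\,r$, i.e.\ ruling out $R\geq 2r\sin(\pi/3)$. But there is a genuine gap at exactly the point you flag. The three supporting lines at the inradius endpoints of the standard $3$-partition (via Lemma~\ref{le:angle}) only give $C\subseteq T$ for a circumscribed equilateral triangle, hence $R\leq 2r$, and as you observe the blanket bound $R<\sqrt{3}\,r$ is simply false for general $3$-rotationally symmetric bodies (the equilateral triangle has $R/r=2$). Your proposed fallback --- ``finitely many arithmetic possibilities for $k_C$'' combined with the containment $C\subseteq T$ --- does not close this: the maximal degree can be any of $9,15,21,27,\ldots$, and the triangle containment never improves past $R\leq 2r$ no matter which $k_C$ occurs. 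So the decisive inequality is asserted but not proved.

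The missing idea is to work with the standard partition for the \emph{maximal} degree $k_C$, not the minimal one. Since $\chi_C=3$ and $C$ is multi-rotationally symmetric, Lemma~\ref{le:divisors} forces $k_C$ to be an odd multiple of $3$ with $k_C\geq 9$, so $\partial C$ carries $k_C$ inradius points, consecutive ones separated by angle $2\pi/k_C$ at $p$. Take a point $x_R\in\partial C$ with $d(p,x_R)=R$ lying between two consecutive such points $v_1,v_2$; one of the two angles at $p$ is at most $\pi/k_C$. In the triangle $p\,v_1\,x_R$, the law of sines gives $R/r=\sin\beta/\sin\gamma$, so the assumption $R/r\geq\sqrt{3}$ forces $\gamma\leq\arcsin(1/\sqrt{3})$, while Lemma~\ref{le:angle} (applied at $v_1$, an endpoint of $P_{k_C}$) gives $\beta\leq\pi/2$. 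Summing the angles to $\pi$ then yields $k_C<4$, contradicting $k_C\geq 9$. This is where the minimality of the degree actually enters --- through the fineness of the $k_C$-fold symmetry constraining how far the boundary can bulge between adjacent inradius points --- and it is the step your proposal does not supply.
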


\begin{proof}
Recall that, by Lemma~\ref{le:dM}, $d_M(P_{\chi_C})=\max\{R,2\,r\sin(\pi/\chi_C)\}$, where $r$ is the inradius of $C$.
Suppose that $d_M(P_{\chi_C})=R$. Then $R\geq 2\,r\sin(\pi/\chi_C)$ and so $R/r\geq\sqrt{3}$.

Fix two consecutive endpoints $v_1,\ v_2$ of $P_{k_C}$, where $k_C$ is the maximal degree of $C$,
and let $x_R\in\ptl C$ be a point with $d(p,x_R)=R$,
which can be assume to lie in the piece of $\ptl C$ delimited by $v_1$ and $v_2$.
Let $\alpha_1$ be the angle determined by the segments $\overline{p\,v_1}$ and $\overline{p\,x_R}$,
and $\alpha_2$ the angle determined by $\overline{p\,x_R}$ and $\overline{p\,v_2}$.
Since $\alpha_1+\alpha_2=2\pi/k_C$ due to the existing rotational symmetry, we can assume without loss of generality that $\alpha_1\leq\pi/k_C$.

Let us now consider the triangle with vertices $p$, $v_1$, $x_R$,
with associated angles $\alpha_1$, $\beta$, $\gamma$,
which will add up to $\pi$ radians.

\begin{figure}[h]
    \includegraphics[width=0.34\textwidth]{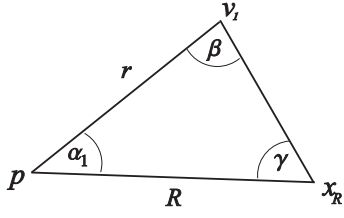}\\
  \caption{Triangle with vertices $p$, $v_1$, $x_R$}
  \label{fig:angles}
\end{figure}

By using the sine's theorem we get
$$\sqrt{3}\leq \frac{R}{r}=\frac{\sin(\beta)}{\sin(\gamma)}\leq\frac{1}{\sin(\gamma)}, $$
which gives $\sin(\gamma)\leq1/\sqrt{3}$, and so $\gamma\leq \arcsin(1/\sqrt{3})$.
Moreover, from Lemma~\ref{le:angle} we have that $\beta\leq\pi/2$. Then
\[
\pi=\alpha_1+\beta+\gamma\leq\pi/k_C+\pi/2+\arcsin(1/\sqrt{3}),
\]
which yields
\[
k_C\leq\frac{\pi}{\pi/2-\arcsin(1/\sqrt{3})}<4,
\]
which is not possible, since there are no multiples of $\chi_C=3$ satisfying that condition.
Then $d_M(P_{\chi_C})\neq R$, which finishes the proof.
\end{proof}

\begin{lemma}
\label{le:5}
Let $C$ be a multi-rotationally symmetric planar convex body, and let $R$ be its circumradius.
If the minimal degree $\chi_C$ of $C$ is equal to $5$, then $d_M(P_{\chi_C})\neq R$.
\end{lemma}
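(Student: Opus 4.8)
The plan is to mimic the structure of the proof of Lemma~\ref{le:3}, adapting the numerical estimates to the case $\chi_C=5$. By Lemma~\ref{le:divisors}, if $\chi_C=5$ then the maximal degree $k_C$ is a multiple of $5$ (and, by Remark~\ref{re:prime}, $k_C$ is odd, so $k_C\in\{5,25,35,\ldots\}$, in particular $k_C\geq5$). I will argue by contradiction: suppose $d_M(P_{\chi_C})=R$. By Lemma~\ref{le:dM} this forces $R\geq 2r\sin(\pi/5)$, hence $R/r\geq 2\sin(\pi/5)=\sqrt{(5-\sqrt5)/2}\approx1.1756$.

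Next I would set up the same triangle as in Lemma~\ref{le:3}. Fix two consecutive endpoints $v_1,v_2$ of the standard $k_C$-partition $P_{k_C}$, and let $x_R\in\ptl C$ realize the circumradius, $d(p,x_R)=R$, chosen to lie on the boundary arc between $v_1$ and $v_2$. Writing $\alpha_1,\alpha_2$ for the two angles at $p$ in the triangles $p v_1 x_R$ and $p x_R v_2$, we have $\alpha_1+\alpha_2=2\pi/k_C$, so we may assume $\alpha_1\leq\pi/k_C$. In the triangle with vertices $p,v_1,x_R$ and angles $\alpha_1$ (at $p$), $\beta$ (at $v_1$), $\gamma$ (at $x_R$), the sine rule gives
\[
\frac{R}{|p v_1|}=\frac{\sin\beta}{\sin\gamma}\leq\frac{1}{\sin\gamma},
\]
and since $|p v_1|\geq r$ we get $R/r\leq 1/\sin\gamma$, whence $\sin\gamma\leq r/R\leq 1/(2\sin(\pi/5))$ and $\gamma\leq\arcsin\!\bigl(1/(2\sin(\pi/5))\bigr)$. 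Lemma~\ref{le:angle} again gives $\beta\leq\pi/2$. Summing the angles,
\[
\pi=\alpha_1+\beta+\gamma\leq\frac{\pi}{k_C}+\frac{\pi}{2}+\arcsin\!\Bigl(\frac{1}{2\sin(\pi/5)}\Bigr),
\]
so that
\[
k_C\leq\frac{\pi}{\pi/2-\arcsin\!\bigl(1/(2\sin(\pi/5))\bigr)}.
\]

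Now comes the step I expect to be the main obstacle, namely the numerical estimate. One computes $2\sin(\pi/5)\approx1.17557$, so $1/(2\sin(\pi/5))\approx0.85065$ and $\arcsin(0.85065)\approx1.0172$ radians; then $\pi/2-1.0172\approx0.5536$, and the right-hand side is $\pi/0.5536\approx5.676$. Thus $k_C\leq5$, which combined with $k_C\geq5$ (a multiple of $5$) forces $k_C=5$. But then $C$ is $5$-rotationally symmetric and \emph{not} $k$-rotationally symmetric for any larger $k$, so the only divisors of $k_C$ greater than $1$ are $k_C$ itself — meaning $C$ would have a single rotational symmetry degree, contradicting the assumption that $C$ is multi-rotationally symmetric. (Equivalently, one observes that a multi-rotationally symmetric set with $\chi_C=5$ must have $k_C\geq25$, well above the bound $5.676$.) This contradiction shows $d_M(P_{\chi_C})\neq R$, completing the proof. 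The only delicate point is to present the inequality $\pi/(\pi/2-\arcsin(1/(2\sin(\pi/5))))<25$ (or even $<6$) with enough care that it is rigorous; a clean way is to note $\arcsin(1/(2\sin(\pi/5)))<\arcsin(0.8507)<1.018<\pi/2-\pi/25$, which after rearranging yields the claim.
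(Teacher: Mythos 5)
Your proof is correct and follows exactly the route the paper takes: the paper's own proof of this lemma is a one-line reference to the argument of Lemma~\ref{le:3}, replacing $\sqrt{3}$ by $2\sin(\pi/5)$ and concluding $k_C<6$, which is precisely what you carry out in detail (and your extra observation that $k_C\geq 25$ makes the contradiction even more comfortable). One tiny point of hygiene: you invoke ``$|p\,v_1|\geq r$'' to pass from $R/|p\,v_1|\leq 1/\sin\gamma$ to $R/r\leq 1/\sin\gamma$, but that inequality points the wrong way — what you actually need, and what holds, is the equality $|p\,v_1|=r$, since $v_1$ is an endpoint of the standard partition and hence realizes the inradius.
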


\begin{proof}
The proof is analogous to the one from Lemma~\ref{le:3}, taking into account that now,
by assuming  $d_M(P_{\chi_C})=R$, we shall get
$$R/r\geq2\sin(\pi/5)=\sqrt{\frac{5-\sqrt{5}}{2}},$$
which finally yields $k_C<6$, a contradiction.
\end{proof}


The following Theorem~\ref{th:main1} summarizes the previous lemmata,
characterizing when \eqref{eq:chain} is a chain of equalities in the case that
the minimal degree is greater than or equal to $3$.

\begin{theorem}
\label{th:main1}
Let $C$ be a multi-rotationally symmetric planar convex body with minimal degree $\chi_C\geq3$.
Then \eqref{eq:chain} is a chain of equalities if and only if $\chi_C\geq7$.
\end{theorem}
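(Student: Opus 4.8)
The plan is to deduce Theorem~\ref{th:main1} by combining the characterization in Lemma~\ref{le:igualdad} with the explicit computations in Lemmata~\ref{le:6}, \ref{le:3} and \ref{le:5}, together with the observation from Remark~\ref{re:prime} that the minimal degree $\chi_C$ is always a prime number. Indeed, Lemma~\ref{le:igualdad} reduces the problem entirely: \eqref{eq:chain} is a chain of equalities if and only if $d_M(P_{\chi_C})=R$. So the task is to determine for which values of the (prime) minimal degree $\chi_C$ this equality holds.

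First I would handle the ``if'' direction. Suppose $\chi_C\geq 7$. Then $\chi_C\geq 6$, so Lemma~\ref{le:6} gives $d_M(P_{\chi_C})=R$, and hence by Lemma~\ref{le:igualdad} we obtain a chain of equalities in \eqref{eq:chain}. (Strictly, Lemma~\ref{le:6} already covers $\chi_C\geq 6$, but since $\chi_C$ is prime the smallest relevant value above $5$ is $7$.)

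Next I would handle the ``only if'' direction by contraposition: assume $\chi_C<7$ and show \eqref{eq:chain} is not a chain of equalities. Since $\chi_C$ is a prime number at least $3$ (we are in the case $\chi_C\geq 3$), the only possibilities are $\chi_C=3$ and $\chi_C=5$; the value $\chi_C=4$ is excluded because $4$ is not prime, and $\chi_C=6$ likewise. For $\chi_C=3$, Lemma~\ref{le:3} gives $d_M(P_{\chi_C})\neq R$, and for $\chi_C=5$, Lemma~\ref{le:5} gives $d_M(P_{\chi_C})\neq R$; in either case Lemma~\ref{le:igualdad} shows that \eqref{eq:chain} contains a strict inequality. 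This completes the equivalence.

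There is essentially no obstacle here, since all the substantive work has already been done in the preceding lemmata; the only point requiring care is the bookkeeping about which values of $\chi_C$ are admissible, which is precisely the content of Remark~\ref{re:prime} (every minimal degree is prime). It is worth emphasizing in the write-up that this primality is what collapses the a priori range $3\leq\chi_C\leq 6$ to the two cases $\chi_C\in\{3,5\}$, so that Lemmata~\ref{le:3} and~\ref{le:5} suffice to rule out all sub-critical cases.
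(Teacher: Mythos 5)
Your argument is correct and follows exactly the route the paper takes: Theorem~\ref{th:main1} is obtained by combining Lemma~\ref{le:igualdad} with Lemmata~\ref{le:6}, \ref{le:3} and \ref{le:5}, using Remark~\ref{re:prime} to reduce the sub-critical range to the two prime cases $\chi_C\in\{3,5\}$. Nothing is missing.
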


The characterization from Theorem~\ref{th:main1} is stated in terms of the
minimal degree. Regarding the maximal degree, we have Theorem~\ref{th:main2},
obtained immediately by using the following algebraic result.
%



\begin{lemma}
\label{le:algebra}
Let $k_C\in\nn$, and let $\{1,k_1,\ldots,k_C\}$ be the set of ordered divisors of $k_C$,
with $k_1\geq 7$ and $k_1\neq k_C$. Then, $k_C$ is a product of prime numbers, all of them greater than or equal to $7$.
\end{lemma}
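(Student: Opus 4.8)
The plan is to recognize that $k_1$, being the smallest divisor of $k_C$ strictly greater than $1$, is precisely the least prime factor of $k_C$, and then to apply unique factorization.

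First I would check that $k_1$ is prime. Indeed, if $k_1=a\,b$ with $1<a\leq b<k_1$, then $a$ would be a divisor of $k_C$ satisfying $1<a<k_1$, contradicting the fact that $k_1$ is the smallest divisor of $k_C$ greater than $1$. Hence $k_1$ is prime, and by the same minimality it is the smallest prime dividing $k_C$.

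Next I would invoke the fundamental theorem of arithmetic to write $k_C=p_1^{a_1}\cdots p_m^{a_m}$ with $p_1<\cdots<p_m$ primes and each $a_j\geq 1$. Every $p_j$ divides $k_C$ and exceeds $1$, so $p_j\geq k_1$; in particular $p_1=k_1\geq 7$, and therefore every prime occurring in the factorization of $k_C$ is at least $7$. Thus $k_C$ is a product of primes, all of them $\geq 7$, counted with multiplicity.

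Finally, the hypothesis $k_1\neq k_C$ is used exactly to exclude the degenerate situation in which $k_C$ is itself prime: if we had $m=1$ and $a_1=1$, then $k_C=p_1=k_1$, contrary to assumption. Hence either $m\geq 2$ or $a_1\geq 2$, so in all cases $k_C$ is a genuine product of at least two primes, each $\geq 7$, which is the claim. I do not expect any real obstacle; the only point deserving a moment of care is this last use of $k_1\neq k_C$ to rule out the prime case.
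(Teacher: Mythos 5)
Your proof is correct and follows essentially the same route as the paper: the minimality of $k_1$ among divisors greater than $1$ forces $k_1$ to be prime, and then every prime factor of $k_C$, being a divisor greater than $1$, is at least $k_1\geq 7$. Your closing observation about the role of $k_1\neq k_C$ (ruling out $k_C$ itself being prime, so that the product has at least two factors) is a reasonable refinement that the paper leaves implicit, but it does not change the argument.
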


\begin{proof}
As $k_1$ is the smallest divisor of $k_C$ (greater than $1$), it is clear that $k_1$ has to be a prime number,
and therefore, all the factors in the decomposition of $k_C$ into prime numbers will be greater than
or equal to $7$.%
\end{proof}

\begin{theorem}
\label{th:main2}
Let $C$ be a multi-rotationally symmetric planar convex body,
with maximal degree $k_C$ and minimal degree $\chi_C\geq3$.
Then, \eqref{eq:chain} is a chain of equalities if and only if
$k_C$ is a product of prime numbers, all of them greater than or equal to $7$.
\end{theorem}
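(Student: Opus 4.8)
The plan is to deduce Theorem~\ref{th:main2} directly from Theorem~\ref{th:main1} and the purely arithmetic Lemma~\ref{le:algebra}, without revisiting any geometry. Recall that by Theorem~\ref{th:main1}, the chain \eqref{eq:chain} consists entirely of equalities if and only if $\chi_C\geq 7$; by Lemma~\ref{le:divisors}, $\chi_C=k_1$ is exactly the smallest divisor of $k_C$ that exceeds $1$, and by Remark~\ref{re:prime} it is a prime.

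First I would prove the forward implication. Suppose \eqref{eq:chain} is a chain of equalities. By Theorem~\ref{th:main1} this gives $\chi_C\geq 7$. Since $\chi_C=k_1$ is the least prime factor of $k_C$, every prime in the factorization of $k_C$ is $\geq\chi_C\geq 7$, so $k_C$ is a product of primes all of which are at least $7$. (If one wants to invoke Lemma~\ref{le:algebra} verbatim instead, note that $C$ being multi-rotationally symmetric forces $k_C$ to have at least two divisors greater than $1$, hence $k_1\neq k_C$, and then Lemma~\ref{le:algebra} applies with the hypothesis $k_1\geq 7$.)

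Conversely, suppose $k_C$ is a product of prime numbers, all of them $\geq 7$. Then the smallest divisor of $k_C$ greater than $1$ is its least prime factor, which is $\geq 7$; that is, $\chi_C\geq 7$. Applying Theorem~\ref{th:main1} in the reverse direction, \eqref{eq:chain} is a chain of equalities. This completes the equivalence.

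There is essentially no obstacle here: the theorem is a bookkeeping corollary combining the geometric content already packaged in Theorem~\ref{th:main1} with the elementary observation that the minimal degree equals the least prime factor of the maximal degree (Lemma~\ref{le:divisors}, Remark~\ref{re:prime}). The only mild subtlety worth stating explicitly is the hypothesis $\chi_C\geq 3$: it guarantees we are in the regime where Lemma~\ref{le:dM}, and hence the whole chain \eqref{eq:chain} and Theorem~\ref{th:main1}, is valid, so it must be carried through to the statement of Theorem~\ref{th:main2} as well. Since both directions reduce to the single numerical threshold $7$ via Theorem~\ref{th:main1}, the proof is short and I would present it as above, perhaps condensing the two implications into one paragraph given how directly they follow.
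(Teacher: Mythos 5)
Your proof is correct and follows exactly the route the paper intends: Theorem~\ref{th:main2} is obtained immediately from Theorem~\ref{th:main1} together with the arithmetic fact (Lemma~\ref{le:algebra}) that the minimal degree is the least prime factor of the maximal degree. Nothing is missing, and your explicit treatment of the converse direction is a slight improvement on the paper's terseness.
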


\begin{example}
For instance, if a multi-rotationally symmetric planar convex body has minimal degree equal to $7$,
then we will have a chain of equalities in \eqref{eq:chain}, due to Theorem~\ref{th:main1},
and the admissible values for its maximal degree will be $7\cdot{7}=49$, $7\cdot11=77$, $7\cdot13=91$, and so on,
in view of Theorem~\ref{th:main2}. Notice that, in particular, Theorem~\ref{th:main2} implies that \eqref{eq:chain}
will be a chain of equalities only for \emph{relatively large} values of the associated maximal degree.
\end{example}


We finish this Subsection~\ref{subsec:k=3} by studying the following related question.
Let $C$ be a multi-rotationally symmetric planar convex body, with maximal degree $k_C$ and minimal degree $\chi_C\geq3$.
Denote by $\{k_1=\chi_C,\ldots,k_n=k_C\}$ the set of divisors of $k_C$ greater than $1$.
In this setting, we can search for the minimal value in $\{d_M(P_{k_1}),\ldots,d_M(P_{k_n})\}$,
where $P_{k_i}$ is the standard $k_i$-partition associated to $C$,
which represents the \emph{global minimal value} for the maximum relative diameter,
taking into account \cite[Th.~4.5]{extending}.
Additionally, this will give, in some sense, a comparison among the \emph{optimal} $k_i$-partitions of $C$
in terms of our functional.
From Lemma~\ref{le:chain}, it clearly follows that
$$\min\{d_M(P_{k_1}),\ldots,d_M(P_{k_n})\}=d_M(P_{k_n}),$$
but we can consider a further question:
is there any other standard $k_i$-partition, apart from $P_{k_n}$, attaining also that minimum value?
The following lemma answers this question.

\begin{lemma}
\label{le:minimal}
Let $C$ be a multi-rotationally symmetric planar convex body,
with maximal degree $k_C$ and minimal degree $\chi_C\geq3$.
Let $\{k_1=\chi_C,\ldots,k_n=k_C\}$ be the set of divisors of $k_C$ greater than $1$,
with $k_1<\ldots<k_n$, and denote by $P_{k_i}$ the standard $k_i$-partition of $C$, $i=1,\ldots,n$.
Then, the minimal value in
$$\{d_M(P_{k_1}),\ldots,d_M(P_{k_n})\}$$
is uniquely attained by $d_M(P_{k_n})$ if and only if $d_M(P_{k_{n-1}})\neq R$.
\end{lemma}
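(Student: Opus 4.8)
The plan is to use the chain of inequalities from Lemma~\ref{le:chain} together with the explicit formula in Lemma~\ref{le:dM} and the characterization in Lemma~\ref{le:igualdad}, reducing the whole statement to a short logical analysis of where the value $R$ can appear in the chain. Recall that by Lemma~\ref{le:dM} we have $d_M(P_{k_i})=\max\{R,2r\sin(\pi/k_i)\}$ for each $i$, and that $\sin(\pi/k_1)>\cdots>\sin(\pi/k_n)$ since $3\le k_1<\cdots<k_n$, so the sequence $d_M(P_{k_i})$ is nonincreasing and, once it reaches the value $R$, it stays equal to $R$ thereafter. In particular $d_M(P_{k_n})=\max\{R,2r\sin(\pi/k_n)\}\ge R$ always.

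First I would prove the "if" direction (uniqueness when $d_M(P_{k_{n-1}})\neq R$). If $d_M(P_{k_{n-1}})\neq R$, then since $d_M(P_{k_{n-1}})\ge R$ we must have $d_M(P_{k_{n-1}})>R$, i.e. $2r\sin(\pi/k_{n-1})>R$. Then for every $i\le n-1$ we get $2r\sin(\pi/k_i)\ge 2r\sin(\pi/k_{n-1})>R$, hence $d_M(P_{k_i})=2r\sin(\pi/k_i)>R$ for all $i\le n-1$. Since $d_M(P_{k_n})\ge R$, the minimal value is $\ge R$ and is strictly below every $d_M(P_{k_i})$ with $i\le n-1$; thus the minimum $d_M(P_{k_n})$ is attained uniquely. (Here I should note the edge case $n=1$ cannot occur: $C$ is multi-rotationally symmetric so it has at least two divisors greater than $1$ — actually at least the divisors $\chi_C$ and $k_C$ — hence $n\ge 2$ and the index $k_{n-1}$ makes sense; I would state this explicitly at the start.)

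For the "only if" direction I argue by contrapositive: suppose $d_M(P_{k_{n-1}})=R$. Then by the monotonicity of the chain, $R=d_M(P_{k_{n-1}})\ge d_M(P_{k_n})\ge R$, so $d_M(P_{k_n})=R=d_M(P_{k_{n-1}})$, and the minimal value is attained by at least two standard partitions, namely $P_{k_{n-1}}$ and $P_{k_n}$ — so it is not uniquely attained. Combining the two directions gives the equivalence.

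I do not expect any serious obstacle here: the argument is essentially bookkeeping on top of Lemmas~\ref{le:dM} and~\ref{le:chain}. The only point requiring a little care is the trivial-but-necessary observation that $n\ge 2$ (so that $k_{n-1}$ exists), which follows from the definition of multi-rotational symmetry, and perhaps a remark that the statement is vacuous/automatic only in degenerate configurations that our hypotheses exclude. Everything else is immediate from the fact that $x\mapsto\max\{R,x\}$ is nondecreasing and equals $R$ exactly when $x\le R$.
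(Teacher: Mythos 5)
Your proof is correct and follows essentially the same route as the paper: reduce uniqueness to $d_M(P_{k_{n-1}})>d_M(P_{k_n})$ via Lemma~\ref{le:chain}, then analyse both cases with the formula of Lemma~\ref{le:dM}. One small wording issue in the ``if'' direction: the strict inequality $d_M(P_{k_n})<d_M(P_{k_{n-1}})$ does not follow from ``$d_M(P_{k_n})\geq R$ and $d_M(P_{k_i})>R$ for $i\leq n-1$'' alone; the correct (and immediate) justification, which is the one the paper gives, is that $d_M(P_{k_n})=\max\{R,2\,r\sin(\pi/k_n)\}$ and \emph{both} arguments of this maximum are strictly smaller than $2\,r\sin(\pi/k_{n-1})=d_M(P_{k_{n-1}})$.
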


\begin{proof}
Taking into account Lemma~\ref{le:chain},
the considered minimal value is unique\-ly attained by $d_M(P_{k_n})$
if and only if $d_M(P_{k_{n-1}})>d_M(P_{k_{n}})$.
Recall that $d_M(P_k)=\max\{R,2\,r\sin(\pi/k)\}$,
for any $k\in\{k_1,\ldots,k_n\}$, due to Lemma~\ref{le:dM}.

Assume that $d_M(P_{k_{n-1}})\neq R$. Then $d_M(P_{k_{n-1}})=2\,r\sin(\pi/k_{n-1})>R$.
Since $2\,r\sin(\pi/k_{n-1})>2\,r\sin(\pi/k_{n})$, both inequalities yield
$$d_M(P_{k_{n-1}})=2\,r\sin(\pi/k_{n-1})>\max\{R,2\,r\sin(\pi/k_n)\}=d_M(P_{k_n}),$$
as desired.

Assume now that $d_M(P_{k_{n-1}})=R$. Then $R\geq 2\,r\sin(\pi/k_{n-1})>2\,r\sin(\pi/k_{n})$,
and so $d_M(P_{k_n})=\max\{R,2\,r\sin(\pi/k_n)\}=R$, which implies that the referred uniqueness does not hold.
\end{proof}


%
%
%
%
%
%
%

\subsection{Minimal degree equal to $2$}
\label{subsec:k=2}

If we consider a multi-rotationally symmetric planar convex body $C$ with minimal degree equal to $2$,
the situation is different from the one corresponding to Subsection \ref{subsec:k=3}.
The reason is that $C$ is, in particular, $2$-rotationally symmetric,
and so the \emph{optimal $2$-partition} for the maximum relative diameter 
is not completely characterized in this case.
Recall that it is proved in \cite{mps} that a minimizing $2$-partition (into two subsets of equal areas)
will consist of a line segment passing through the center of symmetry of the set,
but a more precise description is not known.
In fact, the corresponding standard $2$-partition is not minimizing in general, see Figure~\ref{fig:dos}.



\begin{figure}[h]
    \includegraphics[width=0.6\textwidth]{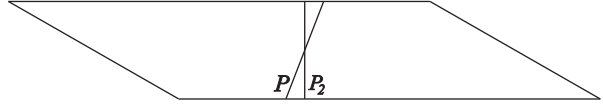}\\
  \caption{An example where the standard $2$-partition $P_2$ is not minimizing for $d_M$, 
  since $d_M(P_2)>d_M(P)$, where $P$ is obtained by a slight rotation of $P_2$}
  \label{fig:dos}
\end{figure}

Therefore, in this case we cannot make the discussion on the \emph{best values}
for the maximum relative diameter functional, as in Subsection~\ref{subsec:k=3}.
At least, we can study our problem partially,
investigating the relation among the values 
$$\{d_M(P_{k_1}),\ldots, d_M(P_{k_n}\},$$
where $\{k_1=2,\ldots,k_n\}$ is the set of divisors (greater than $1$) of the associated maximal degree of $C$,  
taking into account that $d_M(P_{k_1})$ is not optimal in general (although illustrative enough).
The existing relation is given by the following result.



\begin{lemma}
\label{le:2}
Let $C$ be a multi-rotationally symmetric planar convex body 
(different from a circle), with maximal degree $k_C$ and minimal degree $\chi_C=2$.
Let $\{k_1=\chi_C,k_2,\ldots,k_n=k_C\}$ be the set of divisors of $k_C$ (greater than $1$),
with $k_1<k_2<\ldots<k_n$.
Then, $$d_M(P_2)>d_M(P_{k_2})\geq\ldots\geq d_M(P_{k_n}),$$
where $P_{k}$ is the standard $k$-partition associated to $C$.
\end{lemma}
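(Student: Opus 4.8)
The inequalities $d_M(P_{k_2})\geq\ldots\geq d_M(P_{k_n})$ are immediate: since $\chi_C=2$ and $C$ is not a circle, Lemma~\ref{le:divisors} tells us $k_C$ is even, so by Remark~\ref{re:prime} the minimal prime divisor distinct from $2$ — hence $k_2$ — satisfies $k_2\geq3$ (indeed $k_2$ is either $3$ or $4$ depending on $k_C$, but all we need is $k_2\geq3$). Therefore $3\leq k_2<k_3<\ldots<k_n$, so $\sin(\pi/k_2)>\ldots>\sin(\pi/k_n)$, and Lemma~\ref{le:dM} applied to each $P_{k_i}$ with $i\geq2$ gives exactly the chain $d_M(P_{k_2})\geq\ldots\geq d_M(P_{k_n})$, just as in Lemma~\ref{le:chain}. (If $k_C$ has no odd prime divisor, i.e.\ $k_C$ is a power of $2$, one checks separately that $k_2=4$ and the argument is identical; if $n=2$ the chain is vacuous.)

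The real content is the \emph{strict} inequality $d_M(P_2)>d_M(P_{k_2})$. The plan is to bound $d_M(P_{k_2})$ from above by $R$ using Lemma~\ref{le:dM}, and to bound $d_M(P_2)$ from below by something strictly larger than $R$ using the geometry of the standard $2$-partition. For the first bound: since $k_2\geq3$, Lemma~\ref{le:dM} gives $d_M(P_{k_2})=\max\{R,2r\sin(\pi/k_2)\}$; in the worst case $k_2=3$ this is $\max\{R,\sqrt{3}\,r\}$, so in any event $d_M(P_{k_2})\leq\max\{R,\sqrt{3}\,r\}$. For the second bound I would argue directly: the standard $2$-partition $P_2$ is given by a single line segment $\overline{x_1x_2}$ through the center of symmetry $p$, where $x_1,x_2\in\ptl C$ realize the inradius, so $\overline{x_1x_2}$ is a diameter of the incircle and has length $2r$; this segment splits $C$ into two congruent halves $C_1,C_2$. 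Pick a point $x_R\in\ptl C$ with $d(p,x_R)=R$; it lies in one of the two halves, say $C_1$, and then $D(C_1)\geq d(x_R,x_1)$ or $d(x_R,x_2)$ — whichever endpoint lies on the far side. Since $x_1,p,x_2$ are collinear and $d(p,x_1)=d(p,x_2)=r$, the larger of the two distances $d(x_R,x_i)$ is at least $\sqrt{r^2+R^2}$ by the fact that in the triangle $x_1x_2x_R$ the foot of $x_R$ on the line $x_1x_2$ splits it, so one of the two sub-segments has length $\geq r$, giving a right-triangle leg bound. Hence $d_M(P_2)\geq\sqrt{r^2+R^2}$, and since $\sqrt{r^2+R^2}>R$ and $\sqrt{r^2+R^2}>\sqrt{3}\,r$ would require $R^2>2r^2$ — which need not hold — I must be more careful here.

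The subtle point, and the main obstacle, is precisely that $\sqrt{r^2+R^2}$ does not automatically dominate $\sqrt{3}\,r=2r\sin(\pi/3)$ when $R$ is close to $r$. To handle this I would sharpen the lower bound on $d_M(P_2)$ using Lemma~\ref{le:angle}-type reasoning. Write $v_1,v_2$ for two consecutive endpoints of the standard $k_C$-partition $P_{k_C}$ and let $x_R$ be a farthest boundary point lying in the arc between them. Since $k_C\geq2$, the endpoints $x_1,x_2$ of $P_2$ may be taken among the endpoints of $P_{k_C}$ (both partitions use inradius points), and by congruence of the two halves one half $C_1$ contains both an inradius endpoint at distance $r$ from $p$ and a circumradius point $x_R$ subtending an angle at $p$ that is at least $\pi/2$ — this is where the low symmetry ($k=2$) helps, because the half-body spans an angle $\pi$ at $p$, much larger than the angle $2\pi/k_2\leq 2\pi/3$ spanned by a $P_{k_2}$-piece. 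A more robust route: I would show $d_M(P_2)\geq 2r$ trivially (the incircle diameter lies inside one half after the congruent split — actually it is split, so instead note that the whole segment $\overline{x_1x_2}$ of length $2r$ lies in $\partial C_1\cap\partial C_2$, and a farthest point analysis on the convex half gives $D(C_i)\geq 2r$ as well) and combine with the fact, proved as in Lemma~\ref{le:3}, that $d_M(P_{k_2})=R$ would force $k_C<4$, contradicting $k_C$ even and $\geq 2$ with an odd divisor $k_2$; while if $d_M(P_{k_2})=2r\sin(\pi/k_2)$ then since the diametral chord through $p$ has length $2r$ and $C_1$ also contains a point near the boundary beyond $x_1$ on the same side, convexity forces $D(C_1)$ strictly larger than any chord subtending angle $<\pi$ at $p$ of length $2r\sin(\pi/k_2)$. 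I expect the cleanest writeup to isolate the claim ``$d_M(P_2)\geq 2r$, with strict inequality unless $C$ is forced to be degenerate,'' prove it by a short convexity argument on the half-body $C_1$ (its diameter is at least the length of the longest chord it contains, and it contains a chord strictly longer than $\overline{x_1x_2}$ because $C_1$ is not contained in the disk of radius $r$), and then note $2r>2r\sin(\pi/k_2)$ and, via the Lemma~\ref{le:3}/Lemma~\ref{le:5}-style angle estimate, $2r$ also exceeds $R$ is false in general — so the decisive inequality is really $d_M(P_2)>R$ whenever $d_M(P_{k_2})=R$, which the angle argument of Lemma~\ref{le:3} delivers, together with $d_M(P_2)>2r\sin(\pi/k_2)$ whenever $d_M(P_{k_2})=2r\sin(\pi/k_2)$, which the chord-length comparison delivers. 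Assembling these two cases completes the strict inequality, and hence the lemma.
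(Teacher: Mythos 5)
Your endpoint is essentially the paper's proof: split on whether $d_M(P_{k_2})=\max\{R,2r\sin(\pi/k_2)\}$ equals $R$ or equals $2r\sin(\pi/k_2)$; in the second case use $d_M(P_2)\geq d(x_1,x_2)=2r>2r\sin(\pi/k_2)$ (valid since $x_1,x_2$ lie on the common boundary of the two halves, exactly as you say); in the first case use the fact that one of the angles $\angle x_1px_R$, $\angle x_2px_R$ is at least $\pi/2$, so the law of cosines gives $d(x_i,x_R)^2\geq R^2+r^2>R^2$, whence $d_M(P_2)\geq\sqrt{R^2+r^2}>R$. You derive both of these estimates correctly in the course of the proposal, and the handling of the tail $d_M(P_{k_2})\geq\ldots\geq d_M(P_{k_n})$ via Lemma~\ref{le:chain} is also right.

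The problem is that in your final assembly you discard the correct case-1 argument and replace it with the claim that ``$d_M(P_{k_2})=R$ would force $k_C<4$, contradicting $k_C$ even \ldots with an odd divisor $k_2$,'' via the angle estimate of Lemma~\ref{le:3}. This is false in general: $k_2$ need not be odd (you yourself note $k_2=4$ when $k_C$ is a power of $2$), and the square is an explicit counterexample --- there $k_2=k_C=4$, $R=\sqrt{2}\,r$, and $d_M(P_4)=R$ with no contradiction, because the Lemma~\ref{le:3}-type estimate with $R/r\geq\sqrt{2}$ only yields $k_C\leq4$. So case~1 is genuinely nonvacuous and must be handled by a direct lower bound on $d_M(P_2)$, not by showing it cannot occur. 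Excise that detour and keep your $\sqrt{R^2+r^2}$ bound (also note it needs $x_R\neq x_1,x_2$, which holds unless $R=r$, i.e.\ unless $C$ is a circle --- the hypothesis excluding circles is used precisely here); with that, your argument coincides with the paper's.
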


\begin{proof}
It suffices to prove that $d_M(P_2)>d_M(P_{k_2})$,
since the other inequalities follow as in the proof of Lemma~\ref{le:chain} (notice that $k_2>2$).
Let $v_1$, $v_2$ be the endpoints of the standard $2$-partition $P_2$.
On the other hand, by Lemma~\ref{le:dM}, we have that
$d_M(P_{k_2})=\max\{R,2\,r\sin(\pi/k_{2})\}$, where $R$ and $r$ are the
circumradius and the inradius of $C$, respectively. We shall distinguish two cases:

If $d_M(P_{k_2})=R$, call $x_R\in\ptl C$ such that $d(p,x_R)=R$, which can be assumed to be different from an endpoint of $P_2$
(otherwise, $R=r$ and $C$ is then a circle). 
Let $\alpha_1$ be the angle at $p$ determined by the line segments $\overline{p\,v_1}$ and $\overline{p\,x_R}$,
and $\alpha_2$ the angle determined by $\overline{p\,x_R}$ and $\overline{p\,v_2}$.
Clearly $\alpha_1+\alpha_2=\pi$, 
and so we can assume that $\alpha_1\geq\pi/2$. Then
\[
d(v_1,x_R)^2=d(p,x_R)^2+d(p,v_1)^2-2\,d(p,x_R)\,d(p,v_1)\cos(\alpha_1)>d(p,x_R)^2,
\]
yielding
\[
d_M(P_2)\geq d(v_1,x_R)>d(p,x_R)=d_M(P_{k_2}),
\]
as desired.

On the other hand, if $d_M(P_{k_2})=2\,r\sin(\pi/k_2)$, since $k_2>2$, we have that
\[
d_M(P_{k_2})=2\,r\sin(\pi/k_2)<2\,r\sin(\pi/2)=2\,r=d(v_1,v_2)\leq d_M(P_2),
\]
which proves the statement.
\end{proof}

\begin{remark}
For any multi-rotationally symmetric planar convex body $C$ (different from a circle) 
with minimal degree equal to $2$, the corresponding values 
$$\{d_M(P_{k_1}),\ldots, d_M(P_{k_n})\},$$
where $\{k_1=2,\ldots,k_n\}$ is the set of divisors (greater than $1$) 
of the associated maximal degree of $C$, do not \emph{all} coincide, in view of Lemma~\ref{le:2}. 
\end{remark}

\begin{remark}
\label{re:circle2}
If we consider a circle $\mathscr{C}$, then we have that $\mathscr{C}$ is $k$-rotationally symmetric for any $k\in\nn$, $k\geq2$. 
Moreover, the minimal degree $\chi_\mathscr{C}$ is equal to $2$
and, as commented previously in Remark~\ref{re:circle1}, the associated maximal degree would be $+\infty$. 
Straightforward computations using Lemma~\ref{le:dM} give that 
$$d_M(P_2)>d_M(P_3)>d_M(P_4)>d_M(P_5)>d_M(P_6)\geq d_M(P_k),$$
for any $k\in\nn$, $k\geq7$. 
\end{remark}



\begin{remark}
For a multi-rotationally symmetric planar convex body $C$ with minimal degree equal to $2$,
we cannot discuss either which is the \emph{global minimum value} for the maximum relative diameter in this setting,
as in Subsection \ref{subsec:k=3},
since we cannot estimate $$\min\{d_M(P,C):P \text{ 2-partition of }C\}.$$
Recall that such a minimum is not necessarily provided by the standard $2$-partition, as the example from Figure~\ref{fig:dos} shows.
\end{remark}

\noindent {\bf Acknowledgements.} This work was partially supported by the research project MTM2013-48371-C2-1-P,
and by Junta de Andaluc\'ia grant FQM-325.
The author thanks Jes\'us Yepes for suggesting this problem during the RSME Biennial Congress held in Granada in 2015.

\end{document}